\theoremstyle{plain}
\newtheorem{theorem}{Theorem}[section]
\newtheorem{proposition}[theorem]{Proposition}
\newtheorem{lemma}[theorem]{Lemma}
\newtheorem{definition}[theorem]{Definition}
\theoremstyle{remark}
\newtheorem{remark}[theorem]{Remark}
\newtheorem{example}[theorem]{Example}
\numberwithin{equation}{section}
\newcommand{\K}{\ensuremath{\Bbbk}}
\newcommand{\N}{\mathbb N}
\newcommand{\Z}{\mathbb Z}
\newcommand{\s}{\mathbb S}
\newcommand{\R}{\mathcal R}
\newcommand{\MC}{\mathcal MC}
\newcommand{\rad}{\operatorname{rad}}
\newcommand{\Hom}{\operatorname{Hom}}
\newcommand{\Ext}{\operatorname{Ext}}
\newcommand{\sgn}{\operatorname{sgn}}
\renewcommand{\Im}{\operatorname{Im}}
\newcommand{\Id}{\textsl{Id}}
\newcommand{\HH}{\ensuremath{\mathsf{HH}}}
\begin{document}

\title[$L_\infty$-structure on Barzdell's complex]{$L_\infty$-structure on Barzdell's complex for monomial algebras} 

\author[M. J. Redondo and F. Rossi Bertone]{Mar\'ia Julia Redondo and Fiorela Rossi Bertone}

\address{Departamento de Matem\'atica, Instituto de Matem\'atica, Universidad Nacional del Sur, Av. Alem 1253, 8000 Bah\'\i a Blanca, Argentina}
\email{mredondo@uns.edu.ar}
\email{fiorela.rossi@uns.edu.ar}

\subjclass[2010]{16S80; 17B55; 18G35.}

\keywords{Maurer-Cartan equation, Hochschild complex, monomial algebras}

\thanks{The first author is a research member of CONICET (Argentina) and has been  supported  by  the  project  PICT-2015-0366.}

\date{\today}

\begin{abstract}
	Let   $A$ be a  monomial associative finite dimensional algebra over a field  $\K$ of characteristic zero. It is well known that the Hochschild cohomology of $A$ can be computed using Bardzell's complex $B(A)$.  The aim of this article is to describe an explict $L_\infty$-structure on $B(A)$ that induces a weak equivalence of  $L_\infty$-algebras between $B(A)$ and the Hochschild complex $C(A)$ of $A$. This allows us to describe the Maurer-Cartan equation in terms of elements of degree $2$ in $B(A)$.  Finally, we make concrete computations when $A$ is a truncated algebra, and we prove that Bardzell's complex for radical square zero algebras is in fact a dg-Lie algebra.
\end{abstract}

\maketitle

\section*{Introduction}\label{sect:introduction}

It is well known that deformation problems can be described by dg-Lie algebras by means of the classical Maurer-Cartan equation modulo gauge equivalence, and that $L_\infty$-algebras are the natural  generalization that appears to avoid the 
rigidity of classical dg-Lie algebras. 
The importance of $L_\infty$-algebras in deformation theory comes from the fact that two different deformation problems are equivalent if the corresponding  dg-Lie algebras are equivalent as $L_\infty$-algebras.

Given an associative algebra $A$, it is well known that the set of equivalence classes of infinitesimal deformations of $A$ is in one to one correspondence with the second Hochschild cohomology group $\HH^2(A)$, see \cite{G1}. However, in order to describe non infinitesimal deformations, we need to consider the shifted Hochschild complex $C(A)[1]$ which, endowed with the Gerstenhaber bracket, admits a structure of dg-Lie algebra. 

Concerning monomial algebras, Bardzell's complex $B(A)$ has shown to be more efficient when dealing with computations of the Hochschild cohomology groups.  Comparison morphisms between $C(A)$ and $B(A)$ have been described explicitly in \cite{RR1}.  It is natural to ask if one can translate the dg-Lie algebra structure from $C(A)[1]$ to $B(A)[1]$ in order to describe deformations of $A$ using $B(A)[1]$ instead of $C(A)[1]$. 

In this article we find explicitly a $L_\infty$-structure for $B(A)[1]$.  The proof relies on the existence of a contraction involving $C(A)$ and $B(A)$.  More precisely, if $C$ is a dg-Lie algebra and  \[ \begin{tikzpicture}
\node (A) at (0,0) {$B$};
\node (B) at (2,0) {$C$};
\draw[->] (A.20) -- node[above] {$G$} (B.160);
\draw[<-] (A.340) --  node[below] {$F$} (B.200);
\path[->] (B) edge [out=0-30, in=0+30, loop]
node[right] {$H$} (B);
\end{tikzpicture} \]
is a contraction 
of complexes, 
the linear maps $l_n$, $n\geq 1$, defined recursively using $H$ in Subsection \ref{subsec:Linf}, give $B$ a $L_\infty$-structure.

It is well known that the structure of a $L_\infty$-algebra on a graded vector space $V$ is equivalent to the structure of a coderivation differential on $\Gamma_c(sV)$, the cofree Com-coalgebra on the suspension of $V$, see \cite{LM,LS}. This characterization is sometimes used as a definition of $L_\infty$-algebras, see e.g. 
\cite[Definition 4.3.]{K}.  This equivalence can be used to transfer $L_\infty$-structures  or $A_\infty$-structures as particular cases of the so-called homotopy transfer theorem or homological perturbation lemma, see \cite{Be,GLS,HK}.  Our approach is different since we just rely on the contraction of complexes to get the explicit formulas instead of using coderivations. 

Even though the $L_\infty$-algebra $B(A)[1]$ is not in general a dg-Lie algebra, under some hypothesis on the monomial algebra $A$, for instance being a truncated algebra, we show that restricting our attemption to elements of degree one, $l_n = 0$ for all $n\geq 3$. This ensures us that, for the calculation of the Maurer-Cartan elements, it suffices to know the bracket $l_2$. 

\

The paper is organized as follows. The first section contains the definitions and basic facts about monomial algebras, Bardzell's complex and the differential graded structure of the shifted Hochschild complex $C(A)[1]$. 
In Section \ref{sec:transfer} we give the recursive formula for the transfer of $L_\infty$-structures in terms of  a contraction of complexes involving a dg-Lie algebra.
In Section \ref{Sec 3} we show that there exists a contraction of complexes involving $C(A)$ and $B(A)$, and we find a recursive formula for the homotopy $H$. Finally in Section \ref{sec:Barzdell} we use the inductive description of the $L_\infty$-structure of the shifted Bardzell's complex $B(A)[1]$ to get some particular results concerning truncated algebras and Maurer-Cartan elements. We present examples that show that this structure is not nilpotent in general.

\section{Preliminaries}\label{sect:preliminaries}

We fix a field $\K$ of characteristic zero as ground field, and $\otimes=\otimes_\K$.

\subsection{Quivers, relations and monomial algebras} 

We briefly recall some concepts concerning quivers and monomial algebras and we refer the reader to \cite{ARS}, for instance, for
unexplained notions.

Let $A = \K Q/I$ be the quotient of a path algebra $\K Q$ for a finite quiver $Q$ and an admissible ideal $I$. As usual, we let $Q_0$ and $Q_1$ be the sets of 
vertices and arrows of the quiver $Q$,  and $s, t : Q_1 \to Q_0$ the maps associating to each arrow $\alpha$ its source  $s(\alpha)$ and its target $t(\alpha)$.  A path $w$ of length $l$ is a sequence of $l$ arrows $\alpha_1 \dots \alpha_l$ such that $t(\alpha_i)=s(\alpha_{i+1})$.

The elements in $I$ are called \textit{relations}, and $\K Q/I$ is called a \textit{monomial algebra} if the ideal $I$ is generated by paths.

By a fundamental result in representation theory  it  is well known that if $A$ is  an associative  finite dimensional algebra over an algebraically closed field $\K$,   there exists a finite quiver $Q$ such that $A$ is Morita equivalent to $\K Q/I$, where $\K Q$ is the path algebra of $Q$ and $I$ is an admissible two-sided ideal of $\K Q$.  The pair $(Q,I)$ is called a \textit{presentation} of $A$.

\subsection{Hochschild and Bardzell's complexes}\label{subsec:complejos} 

The  \textit{bar resolution} of $A$ is the standard free resolution of the $A$-bimodule $A$ given by 
$$C_*(A)= (A \otimes A^{\otimes n} \otimes A, d_n)_{n \geq 0}.$$ The differential of a $n$-chain is the $(n-1)$-chain given by
\begin{align}\label{d_n}
d_n (a_0 \otimes \cdots \otimes  a_{n+1}) =  \sum_{i=0}^{n} (-1)^{i}  a_0 \otimes \cdots \otimes a_{i-1} \otimes & a_i a_{i+1} \otimes  a_{i+2} \otimes \cdots \otimes a_{n+1}
\end{align}
and an homotopy contraction is given by the $\K$-$A$-map
\begin{equation} \label{s_n}
s_n :A \otimes A^{\otimes n} \otimes  A \to A \otimes A^{\otimes n+1} \otimes A
\end{equation}
such that $s_n(x) = 1 \otimes x$ for any $x \in  A \otimes A^{\otimes n} \otimes  A$, that is, 
\begin{equation}
\Id = s_{n-1}d_n + d_{n+1} s_n.
\end{equation}
Applying the functor $\Hom_{A-A}(-,A)$ to this resolution, and using the isomorphism
\begin{align*}
\Hom_{A-A}(A\otimes V \otimes A,A) & \simeq \Hom_\K (V,A), \qquad
\hat f \mapsto  f 
\end{align*}
given by $f (v) = \hat f (1 \otimes v \otimes 1)$,
we get the  \textit{Hochschild complex} $$C^*(A)= ( \Hom_\K (A^{\otimes n}, A),  d^n)_{n \geq 0}$$
whose cohomology is the  \textit{Hochschild cohomology} $\HH(A)$ of $A$ with coefficients in $A$. 
The differential of a $n$-cochain is the $(n+1)$-cochain given by
\begin{align*}
&(-1)^n (d^nf)(a_0 \otimes  \cdots \otimes a_{n}) =   \hat f d_{n+1} (1 \otimes a_0 \otimes  \cdots \otimes a_{n} \otimes 1)  =  a_0 f(a_1 \otimes \cdots \otimes a_n) \\
&\, - \sum_{i=0}^{n-1} (-1)^{i} f(a_0 \otimes \cdots \otimes a_{i-1}\otimes a_i a_{i+1} \otimes a_{i+1} \otimes \cdots \otimes a_n) 
+ (-1)^{n+1} f(a_0 \otimes \cdots \otimes a_{n-1})a_n.
\end{align*}
Since $A$ es $\K$-projective, the Hochschild cohomology groups $\HH^n(A)$ can be identified with the groups $\Ext^n_{A-A}(A,A)$. This means that we can replace the bar resolution by any convenient resolution of the $A$-bimodule $A$.
In the particular case of a monomial algebra $A$,  Hochschild cohomology computations have been mainly made using Bardzell's resolution, see \cite{B},
$$B^*(A) = (\Hom_{E-E} ({\K} AP_n, A),  (-1)^n \delta^n)_{n \geq 0}$$
where $AP_n$ is the set of supports of $n$-concatenations  associated to a presentation $(Q,I)$ of $A$, $E=\K Q_0$ and $\delta^n (f) = \hat f \delta_{n+1}$,  see \cite[Section 2.3]{RR1} for definitions.

\subsection{Gerstenhaber bracket}

Let $f$ be a Hochschild $n$-cochain and $g$ a $m$-cochain. The Gerstenhaber product of $f$ by $g$ is the $(n+m-1)$-cochain defined by
\begin{align*}
f \circ g= \sum_{i=0}^{n-1} (-1)^{i(m+1)} f \circ_i g =\sum_{i=0}^{n-1} (-1)^{i(m+1)} f (\Id^{\otimes i} \otimes g \otimes \Id^{\otimes n-i-1}).
\end{align*}
The Gerstenhaber bracket is defined by
$$[f, g] = f \circ g -  (-1)^{(n-1)(m-1)} g \circ f.$$
In particular, if $n=m=2$,
\begin{align}\label{circ}
[f, g] =  f(g \otimes \Id - \Id \otimes g) +  g(f \otimes \Id - \Id \otimes f).
\end{align}
This bracket satisfies the super Jacobi identity 
$$[[f, g], h] + (-1)^{(\vert f  \vert  -1)(\vert g \vert + \vert h\vert -1)} [[g, h], f] -
(-1)^{(\vert h \vert -1)  (\vert g \vert -1)} [[f, h], g] =0,$$
the Hochschild differential can be expressed in terms of this bracket and the multiplication $\mu$ of $A$ as
$$df  +  [\mu,f]=0$$
and it satisfies
\begin{align*}
d [ f,g] & =  [ d f, g] -   (-1)^{(\vert g \vert-1)(\vert f \vert  -1)}  [ dg, f ] . 
\end{align*}
Hence, the shifted Hochschild complex
$C^*(A)[1]$ endowed with the Gerstenhaber bracket is a dg-Lie algebra.

\subsection{Contractions}
Recall that a contraction in the sense of \cite[page 81]{EM}, referred to as well as SDR-data in the literature (strong deformation retract), is a diagram of complexes
\[ \begin{tikzpicture}
\node (A) at (0,0) {$Y$};
\node (B) at (2,0) {$X$};
\draw[->] (A.20) -- node[above] {$\iota$} (B.160);
\draw[<-] (A.340) --  node[below] {$\pi$} (B.200);
\path[->] (B) edge [out=0-30, in=0+30, loop, swap] 
node[right] {$h$} (B);
\end{tikzpicture} \]
where $\iota$ and $\pi$ are morphisms of complexes and $h$ is a map of degree $-1$ such that
\begin{align} \label{homotopy} 
\pi \iota = \Id, \qquad &\iota \pi - \Id= hd_X + d_Xh, \mbox{ and} \\ \label{ceros}
& \pi h=0, h \iota =0, hh=0.
\end{align}
The original definition of Eilenberg and Mac Lane do not require $hh= 0$; however,
if $h$ satisfies the remaining four conditions, then  $hd_Xh$ satisfies also the fifth.
Moreover, as mentioned in 
\cite[Remark 2.1]{H}, given data satisfying \eqref{homotopy}, condition \eqref{ceros} can be asserted after $h$ has been replaced by $\hat hd_X \hat h$, where 
$\hat h = (\iota \pi- \Id) h (\iota \pi - \Id)$.

\section{Tansfer of $L_\infty$-structures}\label{sec:transfer}

In this section we introduce the notions of $L_\infty$-algebra and weak $L_\infty$-morphism. Then, given a contraction of complexes 
\[ \begin{tikzpicture}
\node (A) at (0,0) {$B$};
\node (B) at (2,0) {$C$};
\draw[->] (A.20) --  (B.160);
\draw[<-] (A.340) --   (B.200);
\path[->] (B) edge [out=0-30, in=0+30, loop]
(B);
\end{tikzpicture} \]
where $C$ has structure of dg-Lie algebra, we describe explicitly a $L_\infty$-structure on $B$ and a weak $L_\infty$-morphism $\phi: B \to C$. For this we need a series of technical lemmas.

\subsection{$L_\infty$-algebras}
Let $V$ be a $\Z$-graded vector space. Denote by $\wedge V$ the free graded commutative associative algebra over $V$. Let $v_1,\dots,v_n\in V$ be homogeneous elements and $v_1\wedge \dots \wedge v_n$ be its product in $\wedge V$.

\begin{definition}
	Let $\sigma\in \s_n$ be a permutation and $v_1,\dots,v_n\in V$ homogeneous. The Koszul sign $\varepsilon(\sigma) = \varepsilon(\sigma; v_1,\dots,v_n)\in\{\pm1\}$ is defined by
	$$v_1\wedge \dots \wedge v_n= \varepsilon(\sigma)v_{\sigma(1)}\wedge \dots \wedge v_{\sigma(n)},$$
	the antisymmetric Koszul sign $\chi(\sigma) = \chi (\sigma; v_1,\dots,v_n) \in \{\pm1\}$ is defined by 
	$$\chi(\sigma) =\sgn(\sigma)\varepsilon(\sigma; v_1, \cdots, v_n)$$ 
	and, for any $t$ with {$1 \leq t < n$, $\kappa (\sigma)_{t} =  \kappa(\sigma, t;  v_1,\dots,v_n)\in\{\pm1\}$ is given by
		$$\kappa (\sigma)_{t} = (-1)^{(t-1) + (n-t-1)( \sum_{p=1}^t \vert v_{\sigma(p)} \vert )}.$$}
\end{definition}

Let $\s_{t,n-t}$ be the set of all $(t,n-t)$-unshuffles in the symmetric group $\s_n$, that is, 
\[ \s_{t, n-t} = \{ \sigma \in \s_n : \sigma(1) < \cdots < \sigma (t), \ \sigma(t+1) < \cdots < \sigma (n) \} .\] 
Let $\s^{-}_{t,n-t}$ be the set of all $ \sigma \in \s_{t,n-t}$ such that $\sigma(1) < \sigma(t+1)$.  Analogously, for $i+j+k=n$ we define
\[ \s_{i,j,k} = \{ \sigma \in \s_n : \sigma(1) < \cdots < \sigma (i), \ \sigma(i+1) < \cdots < \sigma (i+j), \ \sigma(i+j+1) < \cdots < \sigma (n) \}\] 
and $\s^{-}_{i,j,k}$ the set of all $\sigma\in\s_{i,j,k}$ such that $\sigma (1) < \sigma (i+1) < \sigma (i+j+1)$.
We specify permutations by the list of their values, that is, for any $\sigma \in \s_n$ we write $\sigma = (\sigma(1), \cdots, \sigma(n))$.

\begin{definition}
	A $L_\infty$-algebra is a $\Z$-graded vector space $L$ together with linear maps
	$$l_n: \otimes^n L \to L$$
	of degree {$2-n$} such that, for every $n \in \N$ and homogeneous $v_1, \cdots, v_n \in L$,  the following conditions are  satisfied:
	\begin{align}
	& l_n \  \hat \sigma = \chi(\sigma)\  l_n, \qquad \forall \, \sigma \in \s_n;
	\\
	& \sum_{i+j=n+1}  \sum_{\sigma \in \s_{i,n-i}} (-1)^{i {(j-1)}}  \chi(\sigma)  \
	l_j ( l_i \otimes \Id^{\otimes n-i}) \ \hat \sigma =0,
	\end{align}
	where $\hat \sigma (v_1 \otimes \cdots  \otimes  v_n) = v_{\sigma(1)} \otimes  \cdots  \otimes v_{\sigma(n)}$.
\end{definition}

\begin{remark}
	Observe that for $n=2$, the equations in the previous definition are
	\begin{align} \label{eq: conmutar}
	l_2 (v_{2} \otimes v_{1}) &= - (-1)^{\vert v_1\vert \vert v_2 \vert} l_2 (v_1 \otimes v_2)\\ \label{eq: corchete1}
	l_1 l_2  (v_1 \otimes v_2) & = l_2 (l_1(v_1) \otimes v_2) -  (-1)^{\vert v_1\vert \vert v_2 \vert} l_2 (l_1(v_2) \otimes v_1) \\ \notag
	&= l_2 (l_1(v_1) \otimes v_2) + (-1)^{\vert v_1\vert} l_2 (v_1 \otimes l_1(v_2)).
	\end{align}
	Moreover, when $n=3$ and $l_3=0$, we get the Jacobi identity
	\begin{align} \label{jacobi}
	\sum_{\sigma \in \s_{2,1}} \chi(\sigma) l_2 (l_2 \otimes \Id) \hat \sigma =0.
	\end{align}
\end{remark}

\begin{definition} \cite[Def.~5.2]{LM}
	Let $L = (L, l_n)$ be a $L_\infty$-algebra and $B = (B, d , [- , -])$ a dg-Lie algebra. A {weak} $L_\infty$-morphism from $L$ to $B$ is a collection of skew symmetric linear maps $\phi_n : \otimes^n L \to B$ of degree $1-n$ such that, for every $n \in \N$ and homogeneous $v_1, \cdots, v_n \in L$,
	\begin{align*}
	d \phi_n  + \sum_{j+k=n+1} \sum_{\sigma \in \s_{k,n-k}} & \chi(\sigma) 
	(-1)^{k{ (j-1)}+1}
	\phi_j ( l_k \otimes \Id^{\otimes n-k} ) \ \hat \sigma \\
	&+ \sum_{s+t=n} \sum_{\tau \in \s^{-}_{s,n-s}} \chi(\tau) \kappa(\tau)_{s}
	[ \phi_s, \phi_t ]  \ \hat \tau =0
	\end{align*}
	where  
	$ [ \phi_s, \phi_t]  \ \hat \tau (v_1 \otimes \cdots \otimes v_n) =  [ \phi_s(v_{\tau(1)} \otimes \cdots \otimes v_{\tau(s)}), \phi_t(v_{\tau(s+1)} \otimes \cdots \otimes v_{\tau(n)})] $.
\end{definition}

\subsection{$L_\infty$-structure}\label{subsec:Linf}

Let $C=(C^n, d^n, [- , -])$ be a dg-Lie algebra, $B=(B^n, \delta^n)$ be a cochain complex,  and
\[ \begin{tikzpicture}
\node (A) at (0,0) {$B$};
\node (B) at (2,0) {$C$};
\draw[->] (A.20) -- node[above] {$G^*$} (B.160);
\draw[<-] (A.340) --  node[below] {$F^*$} (B.200);
\path[->] (B) edge [out=0-30, in=0+30, loop]
node[right] {$H^*$} (B);
\end{tikzpicture} \]
where $F^*, G^*$ are morphisms of complexes, $H^*$ is a map of degree $-1$ such that
\begin{align} \nonumber
F^* G^*  = \Id, \qquad &G^* F^* - \Id= H^* d + d H^*, \mbox{ and} \\  \label{vanish}
& F^* H^*=0.
\end{align}
We define recursively the linear maps of degree $2-n$ 
$$l_n: \otimes^n B\to B, \qquad u_n,v_n: \otimes^n B\to C$$ 
and the linear maps of degree $1-n$ 
$$\phi_n: \otimes^n B\to C$$ 
by
$
l_1=\delta^*$, $v_1 =  u_1=0$, $\phi_1= G^*$;
and, for $n >1$ and homogeneous $f_1, \dots, f_n \in B$, 
\begin{align}\label{eq: vn}
v_n  & =  \sum_{t=1}^{n-1} \sum_{\tau \in \s^{-}_{t,n-t}} \chi(\tau){ \kappa (\tau)_{t}} \  [ \phi_t , \phi_{n-t}] \hat \tau  \\  \label{eq: ln}
l_n  & =   F^* v_n, \\ 
\label{eq: un}
u_n & = \sum_{k=2}^n \sum_{\tau \in \s_{k,n-k}} 
(-1)^{k(n-k)+1}  \chi(\tau) \
\phi_{n-k+1} ( l_k \otimes \Id^{\otimes n-k}) \hat \tau,\\ 
\label{eq: phin}
\phi_n & = H^* (u_n + v_n)
\end{align}
where
\begin{align*}
\hat \tau (f_1 \otimes  \cdots \otimes f_n) & = f_{\tau(1)} \otimes \cdots \otimes f_{\tau(n)}\ , \mbox{and}  \\
[ \phi_t , \phi_{n-t}] (f_1 \otimes \cdots \otimes f_n) & = [ \phi_t (f_1 \otimes \cdots \otimes f_t), \phi_{n-t}(f_{t+1} \otimes \cdots \otimes f_n)].
\end{align*}
Observe that the assumption $F^*H^*=0$ implies that $F^*\phi_n=0$ for all $n>1$. Hence 
\begin{align} \label{GF}
F^* u_n = -  F^*\phi_1 l_n  = - F^*G^* l_n =  - F^* v_n.
\end{align}

\begin{remark}\label{solo v}
	Observe that if $H^*G^*=0$ and $H^*H^*=0$ then $H^*u_n=0$ and hence $\phi_n = H^* v_n$. However, we need the auxiliary maps $u_n$ in the following proofs, therefore we proceed with the weaker assumption as stated in \ref{vanish}.
\end{remark}

\begin{lemma}
	For $n\geq 2$ the maps $u_n$, $v_n$, $l_n$ and $\phi_n$  defined above are skew symmetric. 
\end{lemma}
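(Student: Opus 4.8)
The plan is to prove skew-symmetry simultaneously for all four families of maps by induction on $n$, since their recursive definitions are mutually entangled: $v_n$ feeds $l_n$, which feeds $u_n$, and both $u_n$ and $v_n$ feed $\phi_n$, while the bracket in $v_n$ involves the lower-degree $\phi_t$. By "skew symmetric" I mean the $L_\infty$-type symmetry $g \circ \hat\sigma = \chi(\sigma)\, g$ for every $\sigma \in \s_n$, where $g$ is any of $u_n, v_n, l_n, \phi_n$. Since the symmetric group $\s_n$ is generated by adjacent transpositions, it suffices to check the relation for $\sigma$ a transposition swapping two neighbouring tensor slots, and then appeal to the cocycle property $\chi(\sigma\rho) = \chi(\sigma)\chi(\rho)$ (accounting correctly for the Koszul signs under composition) to extend to all of $\s_n$.

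First I would establish the base case and set up the induction. For $n=2$ the maps $u_2, v_2, \phi_2$ are given by explicit single-sum formulas, so skew-symmetry can be verified directly from equations \eqref{eq: vn}--\eqref{eq: phin}, using that $\phi_1 = G^*$ is linear and that the Gerstenhaber bracket on $C[1]$ satisfies the graded antisymmetry $[x,y] = -(-1)^{(|x|-1)(|y|-1)}[y,x]$ built into the definition of the $l_2$ relation. Assuming the claim for all indices strictly below $n$, I would treat each map in the order $v_n \to l_n \to u_n \to \phi_n$, since this respects the dependency chain.

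The heart of the argument is the skew-symmetry of $v_n$; once that is in hand, $l_n = F^* v_n$ inherits it immediately because $F^*$ is a linear (degree-preserving) map, and $\phi_n = H^*(u_n+v_n)$ inherits it from $u_n$ and $v_n$ for the same reason. For $v_n = \sum_t \sum_{\tau \in \s^-_{t,n-t}} \chi(\tau)\kappa(\tau)_t\, [\phi_t, \phi_{n-t}]\hat\tau$, the key is that the unshuffle sets $\s^-_{t,n-t}$ together with the reordering induced by a transposition $\rho$ of adjacent inputs permute the terms of the sum among themselves, up to the compensating signs $\chi$ and $\kappa$. I would show that precomposing with $\hat\rho$ sends each summand indexed by $(t,\tau)$ to the summand indexed by $(t,\tau')$ for an appropriate $\tau' \in \s^-_{t,n-t}$ (or, when $\rho$ separates the two blocks, uses the inductive antisymmetry of $\phi_t, \phi_{n-t}$ together with the graded antisymmetry of the bracket to reabsorb the swapped term), and that the accumulated sign is exactly $\chi(\rho)$. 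The analogous bookkeeping for $u_n$ uses the inductively-established skew-symmetry of the lower $l_k$ and $\phi_{n-k+1}$ together with the unshuffle sets $\s_{k,n-k}$.

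The main obstacle I expect is the sign bookkeeping, specifically verifying that the Koszul factors $\kappa(\tau)_t$ and $\chi(\tau)$ transform correctly when an adjacent transposition acts — the two subcases (the transposition lying entirely within one block of the unshuffle versus straddling the two blocks) behave differently, and in the straddling case one must invoke the symmetry of the bracket rather than merely reindexing. Keeping the degree shifts straight (the maps have degree $2-n$ or $1-n$, and the Gerstenhaber bracket lives on the shifted complex) is where a careless sign will creep in, so I would track all signs through the Koszul convention $v_1 \wedge \cdots \wedge v_n = \varepsilon(\sigma) v_{\sigma(1)} \wedge \cdots \wedge v_{\sigma(n)}$ and use $\chi(\sigma) = \sgn(\sigma)\varepsilon(\sigma)$ systematically, reducing everything to adjacent transpositions to minimise the combinatorial overhead.
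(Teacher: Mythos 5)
Your proposal is correct and follows essentially the same route as the paper: a simultaneous induction treating the maps in the dependency order $v_n \to l_n \to u_n \to \phi_n$, where skew-symmetry of $v_n$ and $u_n$ comes from re-indexing the unshuffle sums and absorbing within-block permutations via the inductive hypothesis on $\phi_t$, $l_k$ and the antisymmetry of the bracket, while $l_n$ and $\phi_n$ inherit the property by post-composition with the linear maps $F^*$ and $H^*$. The only cosmetic difference is that you reduce to adjacent transpositions, whereas the paper handles an arbitrary $\psi \in \s_n$ at once by writing $\hat\tau\hat\psi = (\hat\nu_1 \otimes \hat\nu_2)\hat\sigma$ with $\sigma$ again an unshuffle and $\nu_1, \nu_2$ block permutations, together with the sign identities $\kappa(\tau)_t = \kappa(\sigma)_t$ and $\chi(\tau)\chi(\psi) = \chi(\sigma)\chi(\nu_1)\chi(\nu_2)$.
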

\begin{proof}
	We prove the lemma by induction. For $n=2$ the lemma follows immediately. We assume the claim for every map and every number lower than $n$. 
	Consider first the map $v_n$. Let $\psi\in\s_n$. For every $1\leq t<n$ and $\tau \in\s^{-}_{t,n-t}$ consider the permutation $\sigma \in\s^{-}_{t,n-t}$ such that there exist $\nu_1\in\s_t$ and $\nu_2\in\s_{n-t}$ satisfying 
	$$\hat\tau \hat\psi=(\hat\nu_1\otimes \hat\nu_2)\hat{\sigma}.$$
	It follows immediately that $\kappa(\tau)_{t}=\kappa(\sigma)_{t}$ and $\chi(\tau)\chi(\psi)=\chi(\sigma)\chi(\nu_1)\chi(\nu_2)$. Hence, by the skew symmetric property of $\phi_t$, $\phi_{n-t}$ and the bracket  $[-,-]$, one can check that $v_n$ is skew symmetric. Now, the property for $l_n$  follows by definition.
	
	Following the same argument as before, for every $2\leq k\leq n$ and $\tau\in\s_{k,n-k}$ consider the permutation $\sigma\in\s_{k,n-k}$ such that there exist $\nu_1\in\s_k$ and $\nu_2\in\s_{n-k}$ satisfying
	$$\hat\tau\hat\psi=(\hat\nu_1\otimes \hat\nu_2)\hat\sigma.$$
	Hence, since $\chi(\tau)\chi(\psi)=\chi(\sigma)\chi(\nu_1)\chi(\nu_2)$ and the maps $\phi_{n-k+1}$ and $l_k$ are skew symmetric, the map $u_n$ is so. By the definition of $\phi_n$, the proof is done.
\end{proof}

\begin{theorem} \label{infinito} With the above notation, the maps  $l_n:\otimes^n B\to B$, $n\in \N$, give $B$  a $L_\infty$-structure. Moreover, this structure extends $G$ to a weak $L_\infty$-morphism $\phi: B \to C$, where $\phi=(\phi_n)$ is defined as above.
\end{theorem}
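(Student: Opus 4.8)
The plan is to verify the two defining identities of an $L_\infty$-algebra for the maps $l_n$ and, simultaneously, the weak $L_\infty$-morphism equations for $\phi_n$, by a single induction on $n$. Skew symmetry is already handled by the preceding lemma, so what remains is the quadratic relation
\begin{align*}
\sum_{i+j=n+1} \sum_{\sigma \in \s_{i,n-i}} (-1)^{i(j-1)} \chi(\sigma) \ l_j(l_i \otimes \Id^{\otimes n-i}) \hat\sigma = 0
\end{align*}
together with the morphism identity relating $d\phi_n$, the terms $\phi_j(l_k \otimes \Id)\hat\sigma$, and the brackets $[\phi_s,\phi_t]\hat\tau$. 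The two are tightly coupled through the recursions \eqref{eq: vn}--\eqref{eq: phin}, so I would prove them in tandem rather than separately.

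First I would recast the morphism identity in terms of the auxiliary maps. By definition $u_n$ collects exactly the $\phi_{n-k+1}(l_k \otimes \Id)\hat\sigma$ contributions (with $k\ge 2$) and $v_n$ collects exactly the bracket contributions $[\phi_s,\phi_t]\hat\tau$; the $k=1$ term $\phi_n l_1 = \phi_n \delta^*$ and the term $\delta^* \phi_n = d\phi_n$ are the remaining pieces. Since $\phi_n = H^*(u_n+v_n)$, I would compute $d\phi_n + \phi_n \delta^*$ using the homotopy relation $G^*F^* - \Id = H^*d + dH^*$ from \eqref{vanish}, which is the device that converts the differential applied to $H^*(u_n+v_n)$ into $(G^*F^* - \Id)(u_n+v_n)$ plus a term $H^*d(u_n+v_n)$. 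The point is that $F^*(u_n+v_n)=0$ by \eqref{GF}, so the $G^*F^*$ contribution vanishes, leaving precisely $-(u_n+v_n)$; matching this against the collected $u_n$ and $v_n$ terms should yield the morphism identity, provided the leftover $H^*d(u_n+v_n)$ and the $k=1$ term cancel. That cancellation is where the inductive hypothesis on lower morphism identities enters.

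For the $L_\infty$-relation itself I would apply $F^*$ to the morphism machinery: since $l_n = F^* v_n$ by \eqref{eq: ln}, the quadratic $l$-relation is the image under $F^*$ of an identity among the $v$'s and brackets. Concretely, one expands $l_j(l_i \otimes \Id)\hat\sigma = F^* v_j (F^* v_i \otimes \Id)\hat\sigma$ and reorganizes using the super Jacobi identity for the Gerstenhaber bracket on $C$ (stated in the Gerstenhaber subsection) together with the recursion for $v$. The key structural fact is that $F^*$ is a chain map and $F^* H^* = 0$, so applying $F^*$ kills all the $H^*$-generated correction terms and leaves a clean relation that reduces, via Jacobi on $C$, to zero. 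I would organize the sum over $i+j=n+1$ by splitting off $i=1$ and $j=1$ (which give $l_1 l_n$ and $l_n(l_1\otimes\cdots)$ type terms) and treating $2\le i\le n-1$ through the inductive definition of the inner $l_i$.

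The main obstacle I expect is the bookkeeping of signs: reconciling the Koszul signs $\chi(\sigma)$, the shuffle-reindexing signs $\kappa(\tau)_t$, and the degree shifts $(-1)^{i(j-1)}$ across the three different sign conventions appearing in the unshuffle sums, the bracket super Jacobi identity, and the recursions. In particular, the reduction of $\s^{-}_{s,n-s}$ bracket sums to full $\s_{s,n-s}$ sums (the factor of $\tfrac12$ implicit in passing between antisymmetrized and ordered brackets) and the Jacobi-driven regrouping of nested brackets over $\s_{i,j,k}$ must be matched sign-for-sign with the $\kappa$ factors built into $v_n$. I would isolate this as a separate combinatorial sign lemma so that the main induction reads cleanly, and I would expect that the definition of $\s^{-}$ and the formula for $\kappa(\sigma)_t$ were chosen precisely to make these signs align.
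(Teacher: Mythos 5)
Your proposal takes essentially the same route as the paper: the paper proves Theorem \ref{infinito} via Lemma \ref{induccion}, a simultaneous induction on $n$, using exactly the devices you describe --- writing $d\phi_n = dH^*(u_n+v_n)$ and converting it via the homotopy relation and $F^*(u_n+v_n)=0$ into $-(u_n+v_n)-H^*d(u_n+v_n)$, obtaining the $L_\infty$ relation by applying the chain map $F^*$ to the identity for $dv_n$ (which rests on the Jacobi identity in $C$ and $F^*H^*=0$), and isolating the sign/unshuffle bookkeeping in separate combinatorial lemmas (Lemmas \ref{lem:phi,v}, \ref{lem:phi-u} and \ref{lem: u,phi,l}). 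The only cosmetic difference is that the paper's induction tracks four coupled identities (for $v_n$, $l_n$, $u_n$ and $\phi_n$) rather than your two, but this is the same argument.
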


The proof of  Theorem \ref{infinito} is an immediate consequence of \eqref{eq:l_n} and \eqref{eq:phi_n} in Lemma \ref{induccion}.  Before proving this lemma, we need 
three technical lemmas that show the connection between the maps $u_n, v_n, \phi_n$ and $l_n$.

\begin{lemma}\label{lem:phi,v} 
	For all $n \geq 2$, the maps $v_n$ and $\phi_n$ satisfy the equation
	\[ \sum_{t=2}^{n-1}  \sum_{\tau \in \s_{t,n-t}}\chi(\tau)  \kappa(\tau)_{t}  [v_t, \phi_{n-t}] \hat \tau = 0.\]
\end{lemma}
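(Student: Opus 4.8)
The plan is to substitute the defining formula \eqref{eq: vn} for $v_t$ into each summand, expand, and turn the whole expression into a sum of iterated brackets $[[\phi_s,\phi_{t-s}],\phi_{n-t}]$ precomposed with nested unshuffles. Regrouping these iterated brackets according to the partition of the inputs into three blocks, I expect each group to consist of exactly the three terms appearing in the super Jacobi identity for the Gerstenhaber bracket, so that the sum collapses to zero. Concretely, for fixed $t$ and $\tau\in\s_{t,n-t}$ I would write
\[
[v_t,\phi_{n-t}]\,\hat\tau = \sum_{s=1}^{t-1}\sum_{\rho\in\s^{-}_{s,t-s}} \chi(\rho)\,\kappa(\rho)_{s}\, [[\phi_s,\phi_{t-s}],\phi_{n-t}]\,(\hat\rho\otimes\Id^{\otimes n-t})\,\hat\tau .
\]

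The key combinatorial observation is that composing an inner unshuffle $\rho\in\s^{-}_{s,t-s}$ acting on the first $t$ slots with an outer unshuffle $\tau\in\s_{t,n-t}$ yields a triple unshuffle $\sigma=\tau\circ(\rho\times\Id)\in\s_{s,t-s,n-t}$ satisfying $\sigma(1)<\sigma(s+1)$, and that $(\tau,\rho)\mapsto\sigma$ is a bijection onto this set. Indeed, since $\tau$ is order preserving on $\{1,\dots,t\}$ each block of $\sigma$ stays sorted, the condition $\rho(1)<\rho(s+1)$ becomes $\sigma(1)<\sigma(s+1)$, and the third block remains unconstrained. Along this bijection I would record how the signs combine: $\sgn$ is multiplicative, and the cocycle behaviour of the Koszul sign $\varepsilon$ under refinement of unshuffles lets me rewrite $\chi(\rho)\chi(\tau)$ and $\kappa(\rho)_{s}\kappa(\tau)_{t}$ as the single sign attached to $\sigma$, exactly as in the proof of the preceding skew symmetry lemma. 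After this reindexing the expression becomes $\sum_{i+j+k=n}\sum_{\sigma}(\text{sign})\,[[\phi_i,\phi_j],\phi_k]\,\hat\sigma$ with $i,j,k\geq 1$ and $\sigma(1)<\sigma(i+1)$.

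Next I would fix an unordered set partition of $\{1,\dots,n\}$ into three sorted blocks $X,Y,Z$ with $\min X<\min Y<\min Z$ and collect all summands whose three blocks are these sets. Because the outer block is free while the first of the two inner blocks must carry the smaller minimum, exactly three summands survive, according to which of $X,Y,Z$ is placed in the outer slot, namely
\[
[[\phi(X),\phi(Y)],\phi(Z)],\qquad [[\phi(Y),\phi(Z)],\phi(X)],\qquad [[\phi(X),\phi(Z)],\phi(Y)].
\]
These are precisely the three brackets linked by the super Jacobi identity, so if their coefficients are in the ratio that identity prescribes, the group vanishes and, summing over all set partitions, the whole expression is zero.

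The main obstacle is this sign verification. Writing $\bar X=\sum_{x\in X}(|f_x|-1)$, and similarly for $Y,Z$, one has $|\phi(X)|-1=\bar X$, so the super Jacobi identity demands that the coefficients of the second and third terms be $(-1)^{\bar X(1+\bar Y+\bar Z)}$ and $-(-1)^{\bar Y\bar Z}$ times that of the first. Since $X,Y,Z$ are the same sets in all three surviving summands, the antisymmetric Koszul signs $\chi$ differ only by the block transpositions needed to reorder them, while the $\kappa$ factors absorb the degree shift $2-n$; a careful comparison should show that these differences reproduce exactly the two exponents above. Granting this matching, the super Jacobi identity annihilates each group of three summands, and the lemma follows.
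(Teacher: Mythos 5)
Your proposal is correct and follows essentially the same route as the paper: substitute the definition of $v_t$, observe that composing the inner unshuffle with the outer one gives (bijectively) a triple unshuffle constrained only on its first two blocks, regroup the resulting iterated brackets into triples indexed by a three-block set partition (the paper indexes this by $\sigma\in\s^{-}_{i,t-i,n-t}$ together with $\psi\in\s_{2,1}$), and kill each triple with the graded Jacobi identity. Even the one step you leave as a sign check ("a careful comparison should show\dots") is treated at exactly the same level in the paper, which calls it "a direct but tedious computation" showing $\varepsilon^\psi=\varepsilon^{\Id}\chi(\psi)$.
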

\begin{proof}
	By definition of $v_t$,
	\begin{align} \label{cuenta}
	\sum_{t=2}^{n-1}  &\sum_{\tau \in \s_{t,n-t}} \chi(\tau) \kappa (\tau)_{t}  [v_t, \phi_{n-t}]\hat \tau \\ \notag
	=&\sum_{t=2}^{n-1}  \sum_{\tau \in \s_{t,n-t}} \chi(\tau)  \kappa (\tau)_{t}  \sum_{i=1}^{t-1} \sum_{\mu \in \s^{-}_{i, t-i}} \chi(\mu) {\kappa (\mu)_{i}} [ [\phi_i, \phi_{t-i}]\hat \mu,\phi_{n-t} ]   \hat \tau. 
	\end{align} 
	It is clear that, for any $\tau \in \s_{t,n-t}, \mu \in \s^{-}_{i, t-i}$,
	$ (\hat \mu \otimes \Id^{\otimes n-t}) \hat \tau = \hat \sigma$
	for some $\sigma \in \s_{i, t-i, n-t}$. 
	Set
	\begin{align*}
	w_i (f_1 \otimes \cdots \otimes f_n) & = \phi_i(f_{\sigma(1)} \otimes \dots \otimes  f_{\sigma(i)}), \\
	w_{t-i} (f_1 \otimes \cdots \otimes f_n)  & = \phi_{t-i}(f_{\sigma(i+1)} \otimes \dots \otimes  f_{\sigma(t)}), \\
	w_{n-t} (f_1 \otimes \cdots \otimes f_n) & = \phi_{n-t}(f_{\sigma(t+1)} \otimes \dots \otimes f_{\sigma(n)}).
	\end{align*}
	Then \eqref{cuenta} equals
	\begin{align*}
	\sum_{t=2}^{n-1}  \sum_{i=1}^{t-1}  \sum_{\sigma \in \s^{-}_{i,t-i,n-t}} \sum_{\psi \in \s_{2,1}} \varepsilon^\psi [[w_i, w_{t-i}], w_{n-t}] \hat \psi
	\end{align*}
	where $\varepsilon^\psi\in\{\pm1\}$.  A direct but tedious computation shows that $\varepsilon^\psi = \varepsilon^\Id \chi(\psi)$. Hence the lemma follows by Jacobi identity, see \eqref{jacobi}.
\end{proof} 

\begin{lemma}\label{lem:phi-u} 
	For all $n \geq 2$, the maps $u_n$ and $\phi_n$ satisfy the equation
	\begin{align*}
	\sum_{t=2}^{n-1} \sum_{\sigma \in \s_{t,n-t}} \chi(\sigma)
	{\kappa(\sigma)_{t}} & [u_t ,\phi_{n-t}] \hat \sigma 
	=  \sum_{k=2}^{ n-1} \sum_{\psi \in \s_{k,n-k}}    (-1)^{k(n-k)} \chi(\psi) \
	v_{n-k+1} ( l_k \otimes \Id^{\otimes n-k}) \hat \psi.
	\end{align*}
	
\end{lemma}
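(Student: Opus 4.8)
The goal is to prove Lemma~\ref{lem:phi-u}, which relates the two auxiliary maps $u_n$ and $v_n$ through the $L_\infty$-maps $l_k$. My plan is to start from the left-hand side and substitute the definition \eqref{eq: un} of $u_t$, then reorganize the resulting triple sum over unshuffles into a shape that matches the right-hand side.

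\begin{proof}[Proof proposal]
The plan is to expand the left-hand side using the recursive definition of $u_t$ in \eqref{eq: un}. Substituting, the left-hand side becomes
\[
\sum_{t=2}^{n-1} \sum_{\sigma \in \s_{t,n-t}} \chi(\sigma)\, \kappa(\sigma)_t
\Big[ \sum_{k=2}^{t} \sum_{\mu \in \s_{k,t-k}} (-1)^{k(t-k)+1} \chi(\mu)\,
\phi_{t-k+1}(l_k \otimes \Id^{\otimes t-k})\hat\mu ,\ \phi_{n-t} \Big] \hat\sigma.
\]
As in the proof of Lemma~\ref{lem:phi,v}, I would first observe that for each $\sigma \in \s_{t,n-t}$ and $\mu \in \s_{k,t-k}$ the composite $(\hat\mu \otimes \Id^{\otimes n-t})\hat\sigma$ equals $\hat\rho$ for a unique $\rho$ in the appropriate three-block unshuffle set, and that $\kappa$ and $\chi$ factor accordingly so that the sign bookkeeping collapses cleanly. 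The key structural move is to re-index the triple sum over $(t,k,\mu,\sigma)$ by grouping terms according to the position of the block on which $l_k$ acts; concretely, I would set $m = t-k+1$ (the arity of the $\phi$ appearing) and reorganize so that the inner bracket $[\phi_{m},\phi_{n-t}]$ is recognized as a summand of $v_{n-k+1}$ evaluated after $l_k$ has been applied to the first block.

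The heart of the argument is to match the reorganized sum against the definition \eqref{eq: vn} of $v_{n-k+1}$. After applying $l_k$ to the first $k$ tensor factors, what remains is a sum of brackets $[\phi_m, \phi_{n-t}]$ over all ways of splitting the remaining $n-k$ arguments; summing over the original unshuffles produces exactly the unshuffle sum defining $v_{n-k+1}(l_k \otimes \Id^{\otimes n-k})$. The skew symmetry of $v_{n-k+1}$, already established, is what guarantees that the sum over the restricted unshuffle set $\s^{-}$ in the definition of $v$ expands to the full unshuffle sum $\s_{k,n-k}$ appearing on the right, up to the combinatorial factor of $2$ that is absorbed by the passage between $\s^{-}_{t,n-t}$ and $\s_{t,n-t}$ (note the left-hand side here sums over the full $\s_{t,n-t}$, not $\s^{-}$).

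The main obstacle I expect is the sign and scalar reconciliation: verifying that the product $\chi(\sigma)\kappa(\sigma)_t \cdot (-1)^{k(t-k)+1}\chi(\mu)$, after the reindexing $(\sigma,\mu) \rightsquigarrow \rho$, collapses to precisely $(-1)^{k(n-k)}\chi(\psi)$ with the correct $\kappa$-factor absorbed into the internal definition of $v_{n-k+1}$. This is the same ``direct but tedious computation'' flagged in Lemma~\ref{lem:phi,v}, and I would handle it by carefully tracking how $\kappa(\sigma)_t$ and the Koszul signs transform under the block decomposition $\hat\rho = (\hat\mu \otimes \Id)\hat\sigma$, using the multiplicativity relations $\chi(\tau)\chi(\psi) = \chi(\sigma)\chi(\nu_1)\chi(\nu_2)$ and the compatibility of $\kappa$ with unshuffling that were exploited in the preceding lemmas. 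Once the signs are verified to agree term by term, the identity is immediate.
\end{proof}
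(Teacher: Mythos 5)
Your overall strategy --- expand $u_t$ via its recursive definition, re-index the resulting triple sum over unshuffles, and recognize the definition of $v_{n-k+1}$ applied after $l_k$ --- is exactly the paper's route, and most of your outline tracks it. However, the step where you reconcile the restricted unshuffle sets with the full ones is wrong as stated, and it is precisely the delicate point of this lemma. You claim that the skew symmetry of $v_{n-k+1}$ is ``what guarantees that the sum over the restricted unshuffle set $\s^{-}$ in the definition of $v$ expands to the full unshuffle sum $\s_{k,n-k}$ appearing on the right, up to the combinatorial factor of $2$.'' This conflates two index sets that play different roles: the restricted sets $\s^{-}_{t-k+1,n-t}$ inside the definition of $v_{n-k+1}$ split the $n-k+1$ arguments of the bracket (the first of which is the output of $l_k$), whereas the full set $\s_{k,n-k}$ on the right indexes which $k$ of the original $n$ arguments are fed into $l_k$. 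No skew-symmetry argument and no factor of $2$ relate them.

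The actual mechanism, which the paper makes explicit, is a factorization of operators: for $\sigma\in\s_{t,n-t}$ and $\tau\in\s_{k,t-k}$ one has
$((l_k\otimes\Id^{\otimes t-k})\hat\tau\otimes\Id^{\otimes n-t})\hat\sigma = \hat\mu\,(l_k\otimes\Id^{\otimes n-k})\hat\psi$
with $\psi\in\s_{k,n-k}$ and, crucially, $\mu$ landing \emph{automatically} in the restricted set $\s^{-}_{t-k+1,n-t}$: one has $\mu(1)=1$ because in the expansion of $[u_t,\phi_{n-t}]$ the output of $l_k$ always occupies the first argument of $\phi_{t-k+1}$. The assignment $(\sigma,\tau)\mapsto(\psi,\mu)$ is a bijection --- the cardinalities match exactly, $\binom{n}{t}\binom{t}{k}=\binom{n}{k}\binom{n-k}{t-k}$ --- so the restricted sums defining $v_{n-k+1}$ are reproduced term by term, with no doubling to absorb and no symmetrization needed. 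What then remains is only the sign verification $\chi(\sigma)\chi(\tau)=\chi(\psi)\chi(\mu)$ and $(-1)^{k(n-k)}\kappa(\mu)_{t-k+1}=(-1)^{k(t-k)+1}\kappa(\sigma)_{t}$, which you correctly flag as the tedious part. As written, your symmetrization-plus-factor-of-$2$ bookkeeping would not close up: carrying it out would either introduce a spurious factor of $2$ or produce terms in which the output of $l_k$ sits in the wrong slot of the bracket, neither of which occurs in the correct identity.
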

\begin{proof}
	Using the definition of $u_n$, for any ${\sigma \in \s_{t,n-t}}$ we have
	\begin{align*}
	[u_t, \phi_{n-t}]  \hat \sigma   =   \sum_{k=2}^t \sum_{\tau \in \s_{k, t-k}} { (-1)^{k(t-k)+1}} \chi(\tau)  \ [\phi_{t-k+1}, \phi_{n-t}]  
	((l_k \otimes \Id^{\otimes {t-k}}) \hat \tau \otimes \Id^{\otimes n-t}) \hat \sigma.
	\end{align*}
	Now 
	\[ ((l_k \otimes \Id^{\otimes {t-k}}) \hat \tau \otimes \Id^{\otimes n-t}) \hat \sigma = \hat \mu (l_k  \otimes \Id^{\otimes n-k}) \hat \psi \]
	with ${\psi \in \s_{k, n-k}}, {\mu \in \s^{-}_{t-k+1,n-t}}$. Moreover, $\mu(1)=1$ and
	\begin{align*} 
	\hat \mu (l_k  \otimes \Id^{n-k}) \hat \psi   &(f_1 \otimes \cdots \otimes  f_n)\\ &= l_k(f_{\psi(1)} \otimes \cdots \otimes  f_{\psi(k)}) \otimes f_{\psi(\tau(2)+k-1)}\otimes \cdots
	\otimes f_{\psi(\tau(n-k+1)+k-1)} .	
	\end{align*}
	Since $\chi(\sigma)\chi(\tau)=\chi(\psi)\chi(\mu)$ and 
	$ (-1)^{k(n-k)}\kappa(\mu)_{t-k+1}=(-1)^{k(t-k)+1}\kappa(\sigma)_{t}$, we have
	\begin{align*}
	\sum_{t=2}^{ n-1}  &\sum_{\sigma \in \s_{t,n-t}}  \chi(\sigma)\kappa(\sigma)_{t}  [u_t, \phi_{n-t}]  \hat \sigma \\
	&= \sum_{k=2}^{n-1} \sum_{t=k}^{n-1} \sum_{\psi \in \s_{k, n-k}} \sum_{\mu \in \s^{-}_{t-k+1,n-t}} { \chi(\psi)\chi(\mu)\kappa(\mu)_{t-k+1}} [\phi_{t-k+1}, \phi_{n-t}]  \hat \mu (l_k  \otimes \Id^{\otimes n-k}) \hat \psi . 
	\end{align*}
	Hence the lemma follows by the definition of $v_{n-k+1}$.
\end{proof}

\begin{lemma} \label{lem: u,phi,l}
	For  $n\geq 2$, the following equality holds 
	\begin{align*} 
	\sum_{k=2}^{n-1} &\sum_{\sigma \in \s_{k,n-k}}  (-1)^{{k(n-k)+1}}  \chi(\sigma) 
	u_{n-k+1} ( l_k \otimes \Id^{\otimes n-k}) \hat \sigma \\ 
	& =  {\sum_{k=3}^{n}  \sum_{i=2}^{k-1}  \sum_{\mu \in \s_{i,k-i,n-k}} \hspace{-0.3cm} (-1)^{n(k-1)+i(k-i)} \chi({\mu})  
		\phi_{n-k+1} (l_{k-i+1} \otimes \Id^{\otimes n-k} )(l_i \otimes \Id^{\otimes n-i}) \hat \mu}.
	\end{align*}
\end{lemma}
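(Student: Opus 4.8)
The plan is to substitute the recursive definition \eqref{eq: un} of $u_{n-k+1}$ into the left-hand side and then reorganize the resulting double sum. After the substitution each summand becomes, up to sign, a term of the form $\phi_{n-k-j+2}(l_j\otimes\Id^{\otimes n-k+1-j})\,\hat\tau\,(l_k\otimes\Id^{\otimes n-k})\,\hat\sigma$ with $\sigma\in\s_{k,\,n-k}$ and $\tau\in\s_{j,\,n-k+1-j}$, weighted by the two explicit signs $(-1)^{k(n-k)+1}$ and $(-1)^{j(n-k+1-j)+1}$ together with $\chi(\sigma)\chi(\tau)$. Thus the whole expression is a sum of terms in which two of the maps $l$ are applied before a single $\phi$.

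Next I would split the unshuffles $\tau$ according to the position of the distinguished argument $l_k(f_{\sigma(1)}\otimes\cdots\otimes f_{\sigma(k)})$, which occupies slot $1$ among the $n-k+1$ arguments fed to $u_{n-k+1}$. Since $\tau$ is a $(j,\,n-k+1-j)$-unshuffle, either $\tau(1)=1$, so this argument lies among the first $j$ and is consumed by $l_j$ (the \emph{nested} case, producing $l_j(\dots l_k(\dots)\dots)$), or $\tau(j+1)=1$, so it is untouched by $l_j$ and appears as a separate argument of $\phi$ (the \emph{disjoint} case, producing $\phi(\dots l_k(\dots)\dots l_j(\dots)\dots)$). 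The right-hand side consists exclusively of nested terms, so the argument has two tasks: identify the nested part with the right-hand side, and show that the disjoint part vanishes.

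For the nested terms ($\tau(1)=1$) the permutation $\hat\tau$ fixes the slot holding $l_k(\dots)$, so its shifted degree $2-k$ produces no Koszul sign, and the situation parallels Lemma~\ref{lem:phi-u}. I would commute $\hat\tau$ past $(l_k\otimes\Id^{\otimes n-k})\hat\sigma$ and rewrite the composite as $(l_{k'-i+1}\otimes\Id^{\otimes n-k'})(l_i\otimes\Id^{\otimes n-i})\hat\mu$ with $\mu\in\s_{i,\,k'-i,\,n-k'}$, then perform the change of variables $(k,j)\mapsto(k',i)=(k+j-1,\,k)$. This is a bijection onto the index set $3\le k'\le n$, $2\le i\le k'-1$ of the right-hand side, with inverse $k=i$, $j=k'-i+1$. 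It remains to check the two sign identities $\chi(\sigma)\chi(\tau)=\chi(\mu)$ (which holds because, with $\tau(1)=1$, the unshuffles compose cleanly and $\mu$ is a genuine $(i,k'-i,n-k')$-unshuffle) and $(-1)^{k(n-k)+1}(-1)^{j(n-k+1-j)+1}=(-1)^{n(k'-1)+i(k'-i)}$, exactly of the type recorded in the proofs of Lemmas~\ref{lem:phi,v} and~\ref{lem:phi-u}; together they match the nested part with the right-hand side term by term.

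For the disjoint terms ($\tau(j+1)=1$) I would pair each term with the one obtained by interchanging the roles of the two applied maps, that is, taking the block of arity $j$ as the one fed to the outer slot $(l_k\otimes\Id^{\otimes n-k})$ and the block of arity $k$ as the inner one inside $u$ (realized by an appropriate choice of $\sigma$ and $\tau$, and when $k\ne j$ also by swapping the outer and inner arities). Using the skew-symmetry of $\phi_{n-k-j+2}$ to reorder its two $l$-valued arguments, the two families coincide up to sign, and a direct comparison shows the signs are opposite, so the disjoint contributions cancel in pairs. The main obstacle is precisely this sign bookkeeping: in the disjoint case $\hat\tau$ moves the shifted-degree element $l_k(\dots)$, so $\chi(\tau)$ carries degree-dependent Koszul factors, and one must verify that these combine with the Koszul sign produced by the skew-symmetry of $\phi$ and with the explicit powers of $-1$ to give an exact cancellation (and, in the nested case, to reassemble into $\chi(\mu)$). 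As in the earlier lemmas, this is a direct but lengthy computation of Koszul and unshuffle signs.
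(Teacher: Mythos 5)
Your proposal follows essentially the same route as the paper's proof: expand $u_{n-k+1}$ via its recursive definition, split the resulting unshuffles $\tau$ according to whether the slot carrying $l_k(\cdots)$ is consumed by the inner $l_j$ (nested) or passed directly to $\phi$ (disjoint), cancel the disjoint terms in pairs using the skew symmetry of $\phi$, and re-index the nested terms by $(k',i)=(k+j-1,k)$ to match the right-hand side. The paper leaves the sign verifications at the same level of "direct but tedious computation" as you do, so the proposal is a faithful match.
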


\begin{proof}
	We have
	\begin{align*} 
	\sum_{k=2}^{n-1}& \sum_{\sigma \in \s_{k,n-k}} (-1)^{k(n-k)+1}  \chi(\sigma) 
	\ u_{n-k+1} ( l_k \otimes \Id^{\otimes n-k}) \hat \sigma \\ 
	& = \sum_{t=2}^{n-1} \sum_{\sigma \in \s_{n-t+1,t-1}}
	(-1)^{(t-1)(n-t+1)+1}  \chi(\sigma)  \
	u_{t} ( l_{n-t+1} \otimes \Id^{\otimes t-1}) \hat \sigma
	\end{align*}	
	and
	\begin{align*} 
	u_{t} ( l_{n-t+1} \otimes \Id^{\otimes t-1}) 
	=  \sum_{j=2}^{t} \sum_{\tau \in \s_{j, t-j}}  (-1)^{j(t-j)+1} \chi(\tau)\ \phi_{t-j+1} (l_j \otimes \Id^{t-j} ) \hat \tau ( l_{n-t+1} \otimes \Id^{\otimes t-1})
	\end{align*}
	where $\tau(1) = 1$ or $\tau(1) = j+1$. If $\tau(1)=1$ then
	$$(l_j \otimes \Id^{t-j} ) \hat \tau ( l_{n-t+1} \otimes \Id^{\otimes t-1}) \hat \sigma 
	= ( l_j ( l_{n-t+1} \otimes \Id^{\otimes j-1}) 
	\otimes \Id^{\otimes t-j}) \hat \psi $$
	for some $\psi \in \s_{ n-t+1, j-1, t-j}$. On the other hand, if $\tau(1)=j+1$ then
	$$(l_j \otimes \Id^{t-j} ) \hat \tau ( l_{n-t+1} \otimes \Id^{\otimes t-1}) \hat \sigma 
	= ( l_j \otimes l_{n-t+1} \otimes \Id^{\otimes t- j-1})  \hat \psi $$
	for some $\psi \in \s_{j, n-t+1, t-j-1}$. Since $\phi_{t-j+1}$ is skew symmetric, one can observe that this kind of summands appear twice and with different signs, hence they cancel each other.
	Then
	\begin{align*} 
	\sum_{k=2}^{n-1}& \sum_{\sigma \in \s_{k,n-k}} 
	(-1)^{k(n-k)+1}   \chi(\sigma) \
	u_{n-k+1} ( l_k \otimes \Id^{\otimes n-k}) \hat \sigma \\ 
	& =  \sum_{t=2}^{n-1} \sum_{j=2}^{t} \sum_{\psi \in \s_{n-t+1, j-1, t-j}} 
	\hspace{-0.5cm} (-1)^{(t-1)(n-t+1)+j(t-j)}  \chi(\psi)   \phi_{t-j+1} (l_j \otimes \Id^{t-j} )  ( l_{n-t+1} \otimes \Id^{\otimes t-1}) \hat \psi 
	\end{align*}
	and, changing parameters by the rule $(k,i) = (j+i-1, n-t+1)$, the proof is done.
\end{proof}

The proof of Theorem \ref{infinito} follows immediately by  statements \eqref{eq:l_n} and \eqref{eq:phi_n} of the next lemma.

\begin{lemma} \label{induccion}
	For  $n\geq 2$, the following equalities hold
	\begin{align}   \label{eq:v_n}
	dv_n = &   \sum_{\mu\in\s_{1,n-1}}  { (-1)^n}  \chi(\mu) \ v_n (\delta \otimes \Id^{\otimes n-1})\hat\mu 
	- \sum_{t=2}^{n-1} \sum_{\tau \in \s_{t,n-t}} 
	{ \chi(\tau) \kappa(\tau)_{t} [u_t, \phi_{n-t}] }
	\hat \tau;  \\ \label{eq:l_n}
	\delta  l_n  =&
	{ \sum_{i=1}^{ n-1} }\sum_{\sigma \in \s_{i,n-i}} { (-1)^{i(n-i)+1}}  \chi(\sigma) \
	l_{n-i+1} ( l_i \otimes \Id^{\otimes n-i}) \hat \sigma; \\
	\label{eq:u_n}
	du_n   = & { \sum_{\mu\in\s_{1,n-1}}  { (-1)^n} \chi(\mu) \ u_n (\delta \otimes \Id^{\otimes n-1})\hat\mu } \\ \nonumber
	& + \sum_{k=2}^{n-1} \sum_{\sigma \in \s_{k,n-k}} 
	(-1)^{{k(n-k)}} \chi(\sigma) \ 
	(u_{n-k+1}+v_{n-k+1})  ( l_k \otimes \Id^{\otimes n-k}) \hat \sigma \\ \nonumber
	&  +  \sum_{{ k=3}}^{ n}  \sum_{i=2}^{k-1} 
	\sum_{\psi \in \s_{i,k-i,n-k}}  { (-1)^{n(k-1)+i(k-i)}}\chi({\psi}) \
	\phi_{n-k+1} (l_{k-i+1} \otimes \Id^{\otimes n-k} )(l_i \otimes \Id^{\otimes n-i}) \hat \psi; \\ \label{eq:phi_n}
	d\phi_{n}  = &  { \sum_{\mu\in\s_{1,n-1}}   { (-1)^{n+1}} \chi(\mu) \ \phi_n (\delta \otimes \Id^{\otimes n-1})\hat\mu } - u_n - v_n.
	\end{align}
\end{lemma}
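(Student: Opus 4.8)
The plan is to prove the four identities \eqref{eq:v_n}, \eqref{eq:l_n}, \eqref{eq:u_n} and \eqref{eq:phi_n} simultaneously by induction on $n$, and for each fixed $n$ in exactly this order, so that when a given identity is treated its predecessors for the same $n$ and all four identities for smaller indices are available as inductive hypotheses. The base case $n=2$ is immediate from the definitions $v_1=u_1=0$, $\phi_1=G^*$, $l_1=\delta$ and the dg-Lie axioms of $C$. Throughout I would use the recursive definitions \eqref{eq: vn}--\eqref{eq: phin}, the skew-symmetry established in the preceding lemma, the fact that $F^*,G^*$ are chain maps, and the contraction relations \eqref{vanish} and \eqref{GF}. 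Here $dv_n,du_n,d\phi_n$ denote post-composition with the differential $d$ of $C$, and $\delta l_n$ post-composition with the differential $\delta$ of $B$.

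For \eqref{eq:v_n} I would differentiate $v_n=\sum_{t,\tau}\chi(\tau)\kappa(\tau)_t[\phi_t,\phi_{n-t}]\hat\tau$, apply the dg-Lie Leibniz rule $d[f,g]=[df,g]-(-1)^{(|g|-1)(|f|-1)}[dg,f]$ to each bracket, and substitute the inductive form \eqref{eq:phi_n} for $d\phi_t$ and $d\phi_{n-t}$. The resulting terms split into three families: the $\phi(\delta\otimes\Id)$–contributions reassemble, after a reindexing of unshuffles, into $\sum_\mu(-1)^n\chi(\mu)v_n(\delta\otimes\Id^{\otimes n-1})\hat\mu$; the $[u,\phi]$– and $[\phi,u]$–contributions combine by antisymmetry of the bracket into $-\sum_{t,\tau}\chi(\tau)\kappa(\tau)_t[u_t,\phi_{n-t}]\hat\tau$; and the $[v,\phi]$– and $[\phi,v]$–contributions cancel identically by Lemma \ref{lem:phi,v}. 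Identity \eqref{eq:l_n} then follows almost formally: since $F^*$ is a chain map and $F^*v_m=l_m$ by \eqref{eq: ln}, applying $F^*$ to \eqref{eq:v_n} gives $\delta l_n=F^*dv_n$, where the $\delta$–term becomes the $i=1$ summand $l_n(\delta\otimes\Id)$, and the bracket term is converted by Lemma \ref{lem:phi-u} into $\sum_{k\ge 2}(-1)^{k(n-k)}\chi\, v_{n-k+1}(l_k\otimes\Id)\hat\psi$, whence $F^*v_{n-k+1}=l_{n-k+1}$ produces the remaining summands $i\ge 2$ with the correct signs.

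For \eqref{eq:u_n} I would differentiate $u_n=\sum_{k,\tau}(-1)^{k(n-k)+1}\chi(\tau)\phi_{n-k+1}(l_k\otimes\Id)\hat\tau$, push $d$ onto $\phi_{n-k+1}$, and substitute \eqref{eq:phi_n} for $k<n$ together with $dG^*=G^*\delta$ for $k=n$. The $-u_{n-k+1}(l_k\otimes\Id)$ and $-v_{n-k+1}(l_k\otimes\Id)$ terms reproduce, after the sign flip $(-1)^{k(n-k)+1}\mapsto(-1)^{k(n-k)}$, the middle sum $\sum(u_{n-k+1}+v_{n-k+1})(l_k\otimes\Id)$ of \eqref{eq:u_n}. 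The $\phi(\delta\otimes\Id)\hat\mu(l_k\otimes\Id)\hat\tau$ terms I would split according to whether $\delta$ lands on the $l_k$–output or on a surviving input: the latter, together with the $i=1$ part of the former obtained by expanding $\delta l_k$ through \eqref{eq:l_n}, reassemble into $\sum_\mu(-1)^n\chi(\mu)u_n(\delta\otimes\Id)\hat\mu$, while the $i\ge 2$ part of the former, using $\phi_{n-k+1}(l_{k-i+1}(l_i\otimes\Id)\otimes\Id)=\phi_{n-k+1}(l_{k-i+1}\otimes\Id)(l_i\otimes\Id)$ and the parity check $k(n-k)+(n-k)+i(k-i)\equiv n(k-1)+i(k-i)$, yields the last double sum of \eqref{eq:u_n}.

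Finally, \eqref{eq:phi_n} is where the homotopy enters and is the crux. Writing $\phi_n=H^*(u_n+v_n)$ and using $dH^*=(G^*F^*-\Id)-H^*d$ from \eqref{vanish}, I get $d\phi_n=(G^*F^*-\Id)(u_n+v_n)-H^*d(u_n+v_n)$. Since $F^*(u_n+v_n)=F^*u_n+F^*v_n=0$ by \eqref{GF}, the first term collapses to $-(u_n+v_n)$, producing the $-u_n-v_n$ of \eqref{eq:phi_n}. For the second term I would insert $d(u_n+v_n)=du_n+dv_n$ from the already-proved \eqref{eq:u_n} and \eqref{eq:v_n}: the $(\delta\otimes\Id)$–parts give $-H^*\big[\sum_\mu(-1)^n\chi(\mu)(u_n+v_n)(\delta\otimes\Id)\hat\mu\big]=\sum_\mu(-1)^{n+1}\chi(\mu)\phi_n(\delta\otimes\Id)\hat\mu$, because $H^*$ commutes with pre-composition and $\phi_n=H^*(u_n+v_n)$; the entire remainder vanishes \emph{before} $H^*$ is applied, since Lemma \ref{lem:phi-u} rewrites the bracket sum $-\sum_{t,\tau}\chi(\tau)\kappa(\tau)_t[u_t,\phi_{n-t}]\hat\tau$ as the negative of the $v$–part of the middle sum of \eqref{eq:u_n}, and Lemma \ref{lem: u,phi,l} rewrites the last double sum as the negative of the $u$–part, so the three contributions cancel in pairs. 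This closes the induction. I expect the genuine obstacle to be the sign and unshuffle bookkeeping rather than the conceptual skeleton: matching the $\chi$, $\varepsilon$ and $\kappa$ factors under factorizations $\hat\tau\hat\psi=(\hat\nu_1\otimes\hat\nu_2)\hat\sigma$ as in the skew-symmetry lemma, and verifying parity identities such as $(-1)^{k(n-k)+n-k+i(k-i)}=(-1)^{n(k-1)+i(k-i)}$ and $(-1)^{k(n-k)}\kappa(\mu)_{t-k+1}=(-1)^{k(t-k)+1}\kappa(\sigma)_t$, is where essentially all the work lies.
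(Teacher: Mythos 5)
Your proposal is correct and takes essentially the same route as the paper's proof: a simultaneous induction establishing \eqref{eq:v_n}, \eqref{eq:l_n}, \eqref{eq:u_n}, \eqref{eq:phi_n} in that order, with Lemma \ref{lem:phi,v} absorbing the $[v_t,\phi_{n-t}]$ terms in $dv_n$, Lemma \ref{lem:phi-u} converting the bracket sum when passing to $\delta l_n$, the split of the $\delta$-terms in $du_n$ according to whether $\delta$ hits the $l_k$-output or a surviving input, and Lemmas \ref{lem:phi-u} and \ref{lem: u,phi,l} giving the pairwise cancellation that yields \eqref{eq:phi_n}. The only cosmetic difference is that the paper first collapses the sum over $\s^{-}_{t,n-t}$ into one over $\s_{t,n-t}$ with a single $[d\phi_t,\phi_{n-t}]$ before substituting the inductive hypothesis, whereas you substitute into both arguments of the bracket and combine by antisymmetry afterwards.
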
	
\begin{proof}
	Notice that, for $n\geq 2$,
	\begin{align*}
	d \phi_n = d H (u_n + v_n) = (- Hd + GF - \Id) (u_n + v_n) = - Hd(u_n + v_n) - u_n - v_n
	\end{align*}	
	since $GF (u_n + v_n) =0$, and that \eqref{eq:phi_n} holds also for $n=1$ since $dG=G\delta$.
	
	We prove all the statements simultaneously by induction. For $n=2$,
	\[ 
	l_2  = F[G,G], \ v_2  =[G,G], \ u_2= -  GF [G,G] \mbox{ and } \phi_2  = H (u_2+v_2).  \]
	Observe first that 
	$\displaystyle v_2 = d [G,G] = { \sum_{\mu\in\s_2} { \chi(\mu)}[G,G](\delta \otimes \Id )\hat\mu} $
	since
	\begin{align*}
	d [G,G] (f_1 \otimes f_2) &= d [ G(f_1), G(f_2) ] =  [ dG(f_1), G(f_2) ] -  (-1)^{|f_1| |f_2|}  [ dG(f_2), G(f_1) ] \\
	& =  [ G\delta (f_1), G(f_2) ]  -  (-1)^{|f_1| |f_2|}   [ G\delta (f_2), G(f_1) ]   \\
	& = 	{ \sum_{\mu\in\s_{1,1}}{ \chi(\mu)} [G,G](\delta \otimes \Id )\hat\mu} (f_1 \otimes f_2). 
	\end{align*}
	Hence $dv_2 = \sum_{\mu\in\s_{1,1}} { \chi(\mu)} v_2(\delta \otimes \Id )\hat\mu$ 
	and, since $u_2 = -GF v_2$ and $dGF=GFd$, the same equality holds for $u_2$.	Now 
	\begin{align*}
	d \phi_{2} & = -Hd (u_2+v_2)  - u_2 - v_2 \\
	& = -  \sum_{\mu\in\s_{1,1}} { \chi(\mu)} H(u_2+v_2)(\delta \otimes \Id )\hat\mu - u_2 - v_2  = -  \sum_{\mu\in\s_{1,1}} { \chi(\mu)} \phi_2 (\delta \otimes \Id )\hat\mu - u_2 - v_2;\\
	\delta l_2 &= \delta F v_2 = F d v_2 = F 	{ \sum_{\mu\in\s_{1,1}} { \chi(\mu)} v_2(\delta \otimes \Id )\hat\mu} = 	{ \sum_{\mu\in\s_{1,1}}  { \chi(\mu)} l_2({ l_1} \otimes \Id )\hat\mu}.
	\end{align*} 
	Therefore the four equations hold for $n=2$.
	Let $n >2$ and assume that all the equations hold for any $m$ with $2 \leq m <n$. 
	We will prove  the formula for $dv_n$. 
	Using equation \eqref{eq: corchete1}, for  
	$\mu= (t+1, \cdots, n, 1, \cdots , t)$ we get that
	\begin{align*}
	dv_n =
	& \sum_{t=1}^{n-1} \sum_{\tau \in \s^{-}_{t,n-t}}  \chi(\tau)  \kappa(\tau)_{ t}  ( [ d\phi_t , \phi_{n-t}] \hat \tau - (-1)^{a.b}[ d \phi_{n-t}, \phi_t] )\hat \mu \hat \tau
	\end{align*}
	where $a = \vert \phi_t(v_{\tau(1)} \otimes \cdots \otimes v_{\tau(t)})\vert$ and $b = \vert \phi_{n-t}(v_{\tau(t+1)} \otimes \cdots \otimes  v_{\tau(n)})\vert$.  Since
	$$\s_{n-t,t} \setminus \s^{-}_{n-t,t} = \{ \mu \tau: \tau \in \s^{-}_{t,n-t} \}$$
	a meticulous study of the signs shows that  
	\begin{align*}
	dv_n = & \sum_{t=1}^{n-1} \sum_{\tau \in \s_{t,n-t}}  \chi(\tau)  \kappa(\tau)_{t}  [ d\phi_t , \phi_{n-t}] \hat \tau.
	\end{align*}
	Now, using the inductive hypothesis on \eqref{eq:phi_n} and that $u_1+v_1=0$, we get that 
	\begin{align*}
	dv_n = & \sum_{t=1}^{n-1} \sum_{\tau \in \s_{t,n-t}} (-1)^{t+1} \chi(\tau)  \kappa(\tau)_{t} \sum_{\mu \in \s_{1,t-1}} \chi(\mu)
	[ \phi_t , \phi_{n-t}]  ((\delta \otimes \Id^{\otimes t-1})\hat \mu\otimes \Id^{\otimes n-t}) \hat \tau \\
	& -  \sum_{t=2}^{n-1} \sum_{\tau \in \s_{t,n-t}}  \chi(\tau)  \kappa(\tau)_{t} [ u_t + v_t, \phi_{n-t}] \hat \tau.
	\end{align*}
	Observe that for any $\tau \in \s_{t,n-t}$ and  $\mu \in \s_{1,t-1}$ we have  
	$(\hat \mu\otimes \Id^{\otimes n-t}) \hat \tau = \hat {\psi } \hat { \nu}$,
	where $\psi \in \s^{-}_{t,n-t}$,  $\nu \in \s_{1,n-1}$, $\nu(1)=\tau (\mu(1))$, and 
	$ \chi(\mu) \chi(\tau)\kappa(\tau)_{t} =\chi(\nu) \chi(\psi) \kappa(\psi)_{ t}$.
	Then
	\begin{align*}
	dv_n = &  \sum_{t=1}^{n-1} \sum_{\psi \in \s^{-}_{t,n-t}}\sum_{\nu \in \s_{1,n-1}} (-1)^{t+1}1\chi(\nu)  \chi(\psi)  \kappa(\psi)_{t}
	[ \phi_t , \phi_{n-t}] (\delta \otimes \Id^{\otimes n-1})  \hat \psi  \hat \nu \\
	& -  \sum_{t=2}^{n-1} \sum_{\tau \in \s_{t,n-t}}  \chi(\tau)  \kappa(\tau)_{t} [ u_t + v_t , \phi_{n-t}] \hat \tau
	\end{align*} 
	and,  since $\kappa_{t} (\psi) (\delta \otimes \Id^{\otimes n-1})  \hat \psi = (-1)^{n-t-1} \kappa_{t} (\psi) \hat \psi  (\delta \otimes \Id^{\otimes n-1})$, we get
	\begin{align*}
	dv_n= & (-1)^n \sum_{t=1}^{n-1} \sum_{\nu \in \s_{1,n-1}} \chi(\nu)   v_n (\delta \otimes \Id^{\otimes n-1})\hat \nu -  \sum_{t=2}^{n-1} \sum_{\tau \in \s_{t,n-t}}  \chi(\tau)  \kappa(\tau)_{t} [ u_t + v_t, \phi_{n-t}] \hat \tau.
	\end{align*} 
	From Lemma \ref{lem:phi,v} we conclude that the formula for $dv_n$ follows for any integer $n \geq 2$. This fact and Lemma \ref{lem:phi-u} allows us to prove \eqref{eq:l_n} for all $n \geq 2$ since
	\begin{align*}
	\delta l_n & =
	F d v_n
	= {  \sum_{\mu\in\s_{1,n-1}}(-1)^n  \chi(\mu) \ F v_n (\delta \otimes \Id^{\otimes n-1})\hat\mu } 
	-  \sum_{t=2}^{n-1} \sum_{\tau \in \s_{t,n-t}} \chi(\tau)
	{\kappa(\tau)_{t}}F [u_t ,\phi_{n-t}] \hat \tau \\
	=& {\sum_{\mu\in\s_{1,n-1}} (-1)^n \chi(\mu)\  l_n (\delta \otimes \Id^{\otimes n-1})\hat\mu } 
	+   \sum_{t=2}^{ n-1}  \sum_{\sigma \in \s_{t,n-t}}  { (-1)^{t(n-t)+1}} \chi(\sigma) \
	l_{n-t+1} ( l_t \otimes \Id^{\otimes n-t}) \hat \sigma \\
	&=  \sum_{i=1}^{ n-1}  \sum_{\sigma \in \s_{i,n-i}}  { (-1)^{i(n-i)+1}} \chi(\sigma) \
	l_{n-i+1} ( l_i \otimes \Id^{\otimes n-i}) \hat \sigma.
	\end{align*}
	Using the inductive hypothesis on \eqref{eq:phi_n},  since $u_1 + v_1=0$, we get that 
	\begin{align*}
	du_n  =&  \sum_{k=2}^n \sum_{\sigma \in \s_{k,n-k}} 
	(-1)^{k(n-k)+1} \chi(\sigma)  \
	d\phi_{n-k+1} ( l_k \otimes \Id^{\otimes n-k}) \hat \sigma\\
	=&\sum_{k=2}^n \sum_{\sigma \in \s_{k,n-k}} 
	(-1)^{n(k-1)+1}  \chi(\sigma)  \ \sum_{\mu \in \s_{1,n-k}} \chi(\mu)
	\phi_{n-k+1} (\delta\otimes \Id^{n-k})\hat \mu (l_k \otimes \Id^{\otimes n-k} )\hat \sigma \\
	& + \sum_{k=2}^{n-1} \sum_{\sigma \in \s_{k,n-k}} 
	(-1)^{k(n-k)}  \chi(\sigma) \
	(u_{n-k+1} + v_{n-k+1} ) ( l_k \otimes \Id^{\otimes n-k}) \hat \sigma.
	\end{align*}
	Consider the summand corresponding to $\mu= \Id \in \s_{1,n-k}$. Since \eqref{eq:l_n} holds for  $n \geq 2$, we get
	\begin{align*}
	\sum_{k=2}^n& \sum_{\sigma \in \s_{k,n-k}} 
	(-1)^{n(k-1)+1} \chi(\sigma) 
	\phi_{n-k+1} (\delta l_k \otimes \Id^{\otimes n-k}) \hat \sigma \\
	= &  \sum_{k=2}^n \sum_{\sigma \in \s_{k,n-k}} 
	\chi(\sigma)  \ 
	{ \sum_{i=1}^{ k-1} }  \sum_{\tau \in \s_{i,k-i}}  
	(-1)^{(n+i)(k-1)} \chi(\tau) \
	\phi_{n-k+1} (   l_{k-i+1} ( l_i \otimes \Id^{\otimes k-i}) \hat \tau \otimes \Id^{\otimes n-k}) \hat \sigma  \\	
	= &  \sum_{k=2}^n \sum_{\sigma \in \s_{k,n-k}}  \sum_{\tau \in \s_{1,k-1}}  
	{(-1)^{(n+1)(k-1)}} \chi(\sigma)   \chi(\tau) \
	\phi_{n-k+1} (   l_{k} ( \delta \otimes \Id^{\otimes k-1}) \hat \tau \otimes \Id^{\otimes n-k}) \hat \sigma  \\	
	&  +  \sum_{k=2}^n \sum_{i=2}^{ k-1}   \sum_{\sigma \in \s_{k,n-k}}  \sum_{\tau \in \s_{i,k-i}} \hspace{-0.2cm} (-1)^{(n+i)(k-1)} \chi(\sigma) 
	\chi(\tau) \ \phi_{n-k+1} (   l_{k-i+1} ( l_i \otimes \Id^{\otimes k-i}) \hat \tau \otimes \Id^{\otimes n-k}) \hat \sigma  \\
	= &  \sum_{k=2}^n   \sum_{\nu \in \s_{1,n-1}}   \sum_{\psi \in \s_{k,n-k},  \psi(1)=1} 
	{(-1)^{(n+1)(k-1)}} \chi(\psi)   \chi(\nu) \
	\phi_{n-k+1} ( l_{k} \otimes \Id^{\otimes n-k}) \hat \psi  (\delta \otimes \Id^{\otimes n-1}) \hat \nu  \\	
	& +  \sum_{k=3}^n \sum_{i=2}^{ k-1}   \sum_{\psi \in \s_{i,k-i,n-k}}  {(-1)^{n(k-1)+i(k-i)}} \chi(\psi) 
	\ \phi_{n-k+1}  ( l_{k-i+1} \otimes \Id^{\otimes n-k})( l_i \otimes \Id^{\otimes n-i}) \hat \psi.		
	\end{align*} 
	On the other hand, when $\mu \in \s_{1,n-k}$, $\mu \not = \Id$, and $\sigma \in \s_{k,n-k}$,
	\[ \widehat{ \tau} (\delta\otimes \Id^{n-k})\hat \mu (l_k \otimes \Id^{\otimes n-k} )\hat \sigma =( l_k \otimes \Id^{n-k}) \hat \psi (\delta \otimes \Id^{n-1}) \hat \nu\]
	for some $\nu \in \s_{1,n-1}, \psi\in \s_{k, n-k}$ with $\psi(1)=k+1$, and $\tau=(2,1)$. Since $\phi_{n-k+1}$ is skew symmetric, by the definition of $u_n$, we finally get the desired formula for $du_n$.
	
	Finally, since \eqref{eq:v_n} and  \eqref{eq:u_n} hold for all $n \geq 2$, we have that
	\begin{align*}
	d  \phi_{n} = &  - u_n - v_n  - H  d(v_n + u_n) \\
	= & - u_n - v_n - (-1)^n \hspace{-0.2cm} \sum_{\mu \in \s_{1,n-1}}\hspace{-0.3cm} H(v_n + u_n)  ( \delta \otimes \Id^{\otimes n-1})  \hat \mu  
	+   \sum_{t=1}^{n-1} \sum_{\tau \in \s_{t,n-t}}  \hspace{-0.3cm} \chi(\tau) \kappa (\tau)_t \  
	H [u_t ,\phi_{n-t}] \hat \tau \\
	& -  \sum_{k=2}^{n-1} \sum_{\sigma \in \s_{k,n-k}} \chi(\sigma) 
	{ (-1)^{k(n-k)}} \
	H(u_{n-k+1} + v_{n-k+1})  ( l_k \otimes \Id^{\otimes n-k}) \hat \sigma \\
	&- \sum_{k=3}^{n}  \sum_{i=2}^{k-1}  \sum_{\mu \in \s_{i,k-i,k,n-k}} \hspace{-0.5cm} \chi({\mu}) 
	(-1)^{{ n(k-1)+i(k-i)}} 
	H \phi_{n-k+1} (l_{k-i+1} \otimes \Id^{\otimes n-k} )   (l_i \otimes \Id^{\otimes n-i}) \hat \mu 	\\
	=&  -  u_n  - v_n  + (-1)^{n+1} \sum_{\mu \in \s_{1,n-1}} \phi_{n} ( \delta \otimes \Id^{\otimes n-1}) \hat \mu
	\end{align*}
	where the last equality follows from  Lemmas  \ref{lem:phi-u} and  \ref{lem: u,phi,l}.
\end{proof}

\section{A contraction of  $C(A)$ and $B(A)$} \label{Sec 3}

In this section we will show that there exists a contraction involving the Hochschild complex and Bardzell's complex.

Set $X=C_*(A)$ the Hochschild resolution, and $Y$ any projective resolution of the $A$-bimodule $A$.  Comparison morphisms 
\[ \begin{tikzpicture}
\node (A) at (0,0) {$Y$};
\node (B) at (3,0) {$C_*(A)$};
\draw[->] (A.20) -- node[above] {$F_*$} (B.170);
\draw[<-] (A.340) --  node[below] {$G_*$} (B.190);
\end{tikzpicture} \]
between these two projective resolutions are morphisms of complexes lifting the identity map on $A$.  It is clear that these morphisms exist, and that $G_*F_*, F_*G_*$ are homotopic to the identity maps $\Id_Y$ and $\Id_{C_*(A)}$ respectively, see for example \cite{W}. 

The next proposition will show how to construct recursively homotopy maps in terms of the homotopy $s_n$ defined in $\eqref{s_n}$.

\begin{lemma} \label{H} With the above notation, assume that $F_0G_0 = \Id$.
	An homotopy map between $\Id$ and $F_*G_*$ is given by the $A$-$A$-maps $$H_n: C_n(A) \to C_{n+1}(A)$$ defined recursively by $H_0=0$ and  $H_n= \Id \otimes
	F_n G_n s_{n-1}   - \Id \otimes  H_{n-1} d_n s_{n-1}$, that is, 
	$$H_n(1 \otimes x) : = (s_n F_n G_n  - s_{n} H_{n-1} d_n) (1 \otimes x), \quad \forall \, n\geq 1.$$
\end{lemma}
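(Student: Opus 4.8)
The plan is to prove, by induction on $n\geq 0$, the chain homotopy identity
\[
(\star_n)\colon\qquad F_nG_n-\Id = d_{n+1}H_n + H_{n-1}d_n,
\]
with the convention $H_{-1}=0$; this is exactly the assertion that $H=(H_n)$ is a homotopy between $\Id$ and $F_*G_*$. Before starting the induction I would check that each $H_n$ really is an $A$-$A$-map. Writing $C_n(A)=A\otimes(A^{\otimes n}\otimes A)$, every element is uniquely of the form $a_0\cdot(1\otimes x)$; the recursion only prescribes $H_n$ on the left generators $1\otimes x$, and since $s_n$, $F_n$, $G_n$, $d_n$ and (by induction) $H_{n-1}$ are all right $A$-linear, the formula $H_n=s_n(F_nG_n-H_{n-1}d_n)$ on these generators extends to a genuine bimodule map. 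In particular both sides of $(\star_n)$ are $A$-$A$-maps, so it suffices to verify $(\star_n)$ on the generators $1\otimes x$. The base case $n=0$ is then immediate: $H_0=0$, $H_{-1}=0$ and the hypothesis $F_0G_0=\Id$ make both sides of $(\star_0)$ vanish.

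For the inductive step, fix $w=1\otimes x$, use the defining formula $H_n(w)=s_n(F_nG_n-H_{n-1}d_n)(w)$, and apply the contracting homotopy relation in the form $d_{n+1}s_n=\Id-s_{n-1}d_n$. After adding $H_{n-1}d_n(w)$ and cancelling, this gives
\[
(d_{n+1}H_n+H_{n-1}d_n)(w)=F_nG_n(w)-s_{n-1}\,d_n(F_nG_n-H_{n-1}d_n)(w).
\]
The heart of the computation is to simplify the operator $d_n(F_nG_n-H_{n-1}d_n)$. Since $F_*$ and $G_*$ are morphisms of complexes, $d_nF_nG_n=F_{n-1}G_{n-1}d_n$; substituting the inductive hypothesis $(\star_{n-1})$, that is $F_{n-1}G_{n-1}=\Id+d_nH_{n-1}+H_{n-2}d_{n-1}$, and using $d_{n-1}d_n=0$, I obtain $d_nF_nG_n=d_n+d_nH_{n-1}d_n$, whence $d_n(F_nG_n-H_{n-1}d_n)=d_n$. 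Therefore the right-hand side collapses to $F_nG_n(w)-s_{n-1}d_n(w)$.

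It then remains to prove that $s_{n-1}d_n(1\otimes x)=1\otimes x$, and this is the step I expect to be the crux. Applying $\Id=s_{n-1}d_n+d_{n+1}s_n$ to $w=1\otimes x$ reduces the claim to the vanishing $d_{n+1}s_n(1\otimes x)=0$, which in turn follows from $s_n(1\otimes x)=1\otimes 1\otimes x=0$, i.e.\ from the relation $s_ns_{n-1}=0$. This degeneracy is exactly the extra ``$hh=0$'' condition singled out for contractions in the Preliminaries, and it is the only place where the concrete structure of the bar homotopy $s$ is used rather than purely formal manipulation: inserting a second unit produces a degenerate tensor, so $s$ satisfies $s_ns_{n-1}=0$ on the (normalized) bar resolution. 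Granting it, the displayed right-hand side becomes $F_nG_n(w)-w$, which is precisely $(\star_n)$ evaluated on the generator $w$, completing the induction.

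In summary, the argument is an explicit instance of the standard comparison-homotopy construction: everything except the degeneracy relation is a formal consequence of the definition of $H_n$, the identity $d_{n+1}s_n=\Id-s_{n-1}d_n$, the chain-map property of $F_*,G_*$, and the inductive hypothesis together with $d^2=0$. The only genuine input beyond bookkeeping is the vanishing $s_ns_{n-1}=0$, which is what I would isolate and justify most carefully, since it is precisely what forces $s_{n-1}d_n$ to act as the identity on the left generators $1\otimes x$ and thereby makes the recursion reproduce $F_*G_*-\Id$ on the nose.
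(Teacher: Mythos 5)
Your proof is correct and follows essentially the same route as the paper's: the same induction, combining $d_{n+1}s_n=\Id-s_{n-1}d_n$ with the chain-map property of $F_*,G_*$, the inductive hypothesis together with $d_{n-1}d_n=0$, and reducing everything on left generators to the identity $s_{n-1}d_n(1\otimes x)=1\otimes x$. You are in fact slightly more careful than the paper, which asserts that last identity without comment: as you note, it amounts to $s_ns_{n-1}=0$, which holds in the normalized setting and in the reduced complex $\overline C_*(A)$ of Section 3 (where the lemma is actually applied), rather than being automatic for the bar resolution as literally defined.
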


\begin{proof} Since $F_0G_0= \Id$, it is clear that $H_0=0$ satisfies $d_1 H_0 = F_0 G_0 - \Id$. Assume by induction that $d_{m+1} H_m + H_{m-1} d_m = F_m G_m - \Id$ holds for any $m < n$.  Then 
	$$ d_{n} H_{n-1} d_n = F_{n-1} G_{n-1} d_n- d_n = d_n F_n G_n - d_n.$$
	Using that 
	$s_{n-1} d_n + d_{n+1} s_n = \Id$ and that 
	$ s_{n-1} d_n (1 \otimes x)  = 1 \otimes x$,
	we have that
	\begin{align*}
	(d_{n+1} s_n F_n G_n)  (1 \otimes x)  & = (\Id - s_{n-1} d_n) F_n G_n  (1 \otimes x) \\
	& =  (F_nG_n - s_{n-1} d_n - s_{n-1} d_n H_{n-1} d_n)  (1 \otimes x) \\
	& = (F_nG_n - \Id + d_{n+1} s_{n} H_{n-1} d_n - H_{n-1} d_n)  (1 \otimes x)
	\end{align*}
	and hence the $A$-$A$-map $H_n$ defined by $$H_n(1 \otimes x) : = (s_n F_n G_n  - s_{n} H_{n-1} d_n) (1 \otimes x)$$
	satisfies the equation $d_{n+1} H_n + H_{n-1} d_n  = F_n G_n - \Id.$
\end{proof}

Let $A=\K Q/I $ be a monomial algebra.  It is well known that in this case $C_*(A)$ can be replaced by the exact complex
\[\overline C_*(A) = (A \otimes_E \rad A^{\otimes_E n} \otimes_E A, d_n)_{n \geq 0}\]
where $E= \K Q_0$, $A = E \oplus \rad A$ as $E$-bimodules, and $d_n$ is defined as in \eqref{d_n}, see \cite{C}.  Moreover,  the map $s_n$ defined now by 
\[ s_n (a \otimes x) = 1 \otimes a_r \otimes x\]
is an homotopy contraction, where $a= a_E+ a_r$.  Hence $s_n(1 \otimes x)=0$ and $s_{n-1} s_n =0$.  Lemma \ref{H} holds also for $\overline C_*(A)$
and in this case 
\begin{align}\label{radical}
H_n(1 \otimes x) = 1 \otimes F_n G_n  (1 \otimes x)  - 1 \otimes H_{n-1}x = ( \Id \otimes
F_n G_n  s_{n-1}   - \Id \otimes  H_{n-1}) (1 \otimes x).
\end{align}

Let $Y=B_*(A)$ be Bardzell's resolution for the monomial algebra $A=\K Q/I$. Even though the existence of comparison morphisms is clear, an explicit construction of these morphisms is not  always easy.  In \cite{RR1}, comparison morphisms between the projective resolutions $\overline C_*(A)$ and $B_*(A)$ have been explicitly described for any monomial algebra $A$. This description will allow us to prove that, when applying the functor $\Hom_{A-A}(-,A)$, we get 
\[ \begin{tikzpicture}
\node (A) at (0,0) {$B^*(A)$};
\node (B) at (3,0) {$\overline C^*(A)$};
\draw[->] (A.10) -- node[above] {$G^*$} (B.170);
\draw[<-] (A.350) --  node[below] {$F^*$} (B.190);
\path[->] (B) edge [out=0-18, in=0+18, loop]
node[right] {$H^*$.} (B);
\end{tikzpicture} \]
The equality $F^*G^* = \Id$ has been proved in \cite{RR1}.  In the forthcoming lemmas we will prove that all the vanishing conditions between $H^*, F^*$ and $G^*$ are satisfied, see \cite[Section 3]{RR1} for the definition of $F^*$ and $G^*$.  Since $F^n(f)=f F_{n}, G^n(f)=f G_{n}$ and $(-1)^{n-1} H^n (f) = f H_{n-1}$, it is enough to prove all the equalities for $F_*, G_*$ and $H_*$.

\begin{remark} \label{cuentas} For any $\alpha_1, \cdots, \alpha_n, \beta \in Q_1$ and for any $n \geq 1$, direct computations show that
	\begin{enumerate}
		\item 
		$H_1 (1 \otimes  \alpha_1 \cdots \alpha_n \otimes 1) = \sum_{i=2}^n 1 \otimes \alpha_1 \cdots \alpha_{i-1} \otimes \alpha_i \otimes \alpha_{i+1} \cdots \alpha_n$;
		\item
		$H_2 (1 \otimes \beta \otimes \alpha_1 \cdots \alpha_n \otimes 1) = \sum_{j=2}^n 1 \otimes \beta \otimes \alpha_1\dots \alpha_{j-1}\otimes \alpha_j \otimes \alpha_{j+1}\dots  \alpha_n$;
		\item 
		$H_n (1 \otimes \alpha_1 \otimes \cdots \otimes \alpha_n \otimes 1)=0$.
	\end{enumerate}
\end{remark}

\begin{lemma} For any $n \geq 0$ we have that 
	$$G_{n+1} H_n = \sum_{j=2}^{n}  (-1)^{n-j} G_{n+1} (\Id^{\otimes n-j+1} \otimes F_{j} G_{j} s_{j-1}).$$
\end{lemma}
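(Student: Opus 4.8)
The plan is to prove the identity by induction on $n$, separating a purely formal part that merely unwinds the recursion for $H_n$ from a second part, where the explicit comparison maps of \cite{RR1} genuinely enter and which is responsible for the sum starting at $j=2$ rather than at $j=1$.

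First I would establish the closed form
\[ H_n = \sum_{j=1}^{n} (-1)^{n-j}\, \Id^{\otimes n-j+1}\otimes F_j G_j s_{j-1} \]
as maps $\overline C_n(A)\to \overline C_{n+1}(A)$. This follows by induction directly from \eqref{radical}, with no reference to the internal structure of $F_*,G_*$. Assuming the formula for $H_{n-1}$, prepending a tensor factor gives
\[ \Id\otimes H_{n-1} = \sum_{j=1}^{n-1}(-1)^{n-1-j}\,\Id^{\otimes n-j+1}\otimes F_j G_j s_{j-1} = -\sum_{j=1}^{n-1}(-1)^{n-j}\,\Id^{\otimes n-j+1}\otimes F_j G_j s_{j-1}, \]
and substituting into $H_n = \Id\otimes F_n G_n s_{n-1}-\Id\otimes H_{n-1}$ reproduces the claimed sum, the top term $j=n$ coming from $\Id\otimes F_n G_n s_{n-1}$. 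The base case $H_0=0$ and the value of $H_1$ are immediate from Lemma \ref{H}, and the computations in Remark \ref{cuentas} serve as a consistency check on low degrees.

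Applying $G_{n+1}$ to this closed form yields
\[ G_{n+1}H_n = \sum_{j=1}^{n}(-1)^{n-j}\,G_{n+1}(\Id^{\otimes n-j+1}\otimes F_j G_j s_{j-1}), \]
so the statement is \emph{equivalent} to the vanishing of the bottom summand, namely
\[ G_{n+1}(\Id^{\otimes n}\otimes F_1 G_1 s_0)=0. \]
Note that this single vanishing also disposes of the base cases $n=0,1$, where the asserted sum is empty.

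The hard part will be precisely this last vanishing, since it is the only step that sees the combinatorics of the comparison maps rather than the formal recursion. To prove it I would unwind $F_1 G_1 s_0$ on the final two tensor slots using the explicit description of $F_*$ and $G_*$ from \cite[Section 3]{RR1} (together with the evaluation of $F_1G_1$ implicit in Remark \ref{cuentas}), and then show that feeding the resulting string into $G_{n+1}$ produces only tensors that $G_{n+1}$ annihilates, because the splitting that $F_1 G_1 s_0$ introduces in the last slot can never complete the string $\alpha_1\otimes\cdots\otimes\alpha_{n-1}$ to a support of an $(n+1)$-concatenation associated to the presentation $(Q,I)$. It is this matching condition against $AP_{n+1}$, and not any cancellation among the higher terms, that is the genuine obstacle; once it is verified, the inductive formula follows at once.
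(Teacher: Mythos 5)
Your proposal is correct, and in substance it is the paper's own argument with different bookkeeping; the comparison is worth recording. The paper never writes down your closed form: it sets $D_n := H_n - \sum_{j=2}^{n}(-1)^{n-j}\,\Id^{\otimes n-j+1}\otimes F_jG_js_{j-1}$, observes from \eqref{radical} that $D_n=-\,\Id\otimes D_{n-1}$ with $D_1=H_1$, and runs the induction on the containment of $\Im D_n$ in the sub-bimodule $X_{n+1}\subset\overline C_{n+1}(A)$ generated by the tensors $1\otimes v_1\otimes\cdots\otimes v_{n+1}\otimes 1$ with $v_nv_{n+1}\notin I$, which $G_{n+1}$ annihilates by the definition of $G$ in \cite{RR1}. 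Your route unwinds the same recursion to the end, giving the exact identity $H_n=\sum_{j=1}^{n}(-1)^{n-j}\,\Id^{\otimes n-j+1}\otimes F_jG_js_{j-1}$ (the paper's recursion yields this as well, since $D_1=H_1=\Id\otimes F_1G_1s_0$ forces $D_n=(-1)^{n-1}\,\Id^{\otimes n}\otimes F_1G_1s_0$), and thereby reduces the lemma to the single vanishing $G_{n+1}(\Id^{\otimes n}\otimes F_1G_1s_0)=0$; this buys a cleaner reduction, at the cost of having to interpret each $\Id^{\otimes n-j+1}\otimes F_jG_js_{j-1}$ consistently (the leading coefficient of $F_jG_j(1\otimes\cdots)$ must be cut to its radical part and inserted as a new slot, which is what the outer $\Id\otimes$ does in \eqref{radical}; the paper's statement needs the same convention). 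The vanishing you defer is exactly the assertion $\Im(\Id^{\otimes n}\otimes F_1G_1s_0)\subset X_{n+1}$ that powers the paper's induction, and it is a two-line check rather than a delicate matching problem: writing the last slot as $v_n=\alpha_1\cdots\alpha_k$, the map $F_1G_1s_0$ turns $v_n\otimes a$ into $\sum_{i}\alpha_1\cdots\alpha_{i-1}\otimes\alpha_i\otimes\alpha_{i+1}\cdots\alpha_k\,a$, so every surviving tensor in the image has last two radical slots $\alpha_1\cdots\alpha_{i-1}$ and $\alpha_i$, whose product is a subpath of the nonzero element $v_n\in A$ and hence not in $I$; now quote the vanishing property of $G_{n+1}$. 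In particular the obstruction is entirely local to the last two slots: your explanation that the splitting ``can never complete the string $\alpha_1\otimes\cdots\otimes\alpha_{n-1}$ to a support of an $(n+1)$-concatenation'' reaches the right conclusion but looks in the wrong place, since the first $n-1$ slots carry arbitrary elements of $\rad A$ and play no role. With that one fact made explicit, your plan closes and agrees with the paper's proof.
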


\begin{proof}
	Let $X_m$ be the $A^e$-submodule of $ \overline C^m(A)=A \otimes_E \rad^m A \otimes_E A$ generated by the set 
	$$\{  1 \otimes v_1 \otimes \cdots \otimes v_m \otimes 1: v_{m-1}v_m \not \in I \}.$$  By definition, $G_m (1 \otimes v_1 \otimes \cdots \otimes v_m \otimes 1)=0$ if $v_{m-1}v_m \not \in I$, hence $G_m (X_m)=0$.
	We will prove, by induction, that 
	$$\Im (H_n - \sum_{j=2}^{n}  (-1)^{n-j} \Id^{\otimes n-j+1} \otimes F_{j} G_{j}s_{j-1}) \subset X_{n+1}.$$
	By Remark \ref{cuentas} we have $\Im H_1 \subset X_2$. Moreover,  
	$\Im (H_2 - \Id \otimes F_2G_2 s_1) = \Im \Id \otimes  H_{1} \subset X_3$.
	The lemma follows by the inductive hypothesis and by definition of  $H_n$ in $\overline C^n(A)$, see \eqref{radical}, since
	\begin{align*}
	H_n & -  \sum_{j=2}^{n}  (-1)^{n-j} \Id^{\otimes n-j+1} \otimes F_{j} G_{j}s_{j-1}  \\
	& = H_n - \Id \otimes F_nG_n s_{n-1} -   \sum_{j=2}^{n-1}  (-1)^{n-j} \Id^{\otimes n-j+1} \otimes F_{j} G_{j}s_{j-1} \\
	&= - \Id \otimes H_{n-1}  -   \sum_{j=2}^{n-1}  (-1)^{n-j} \Id^{\otimes n-j+1} \otimes F_{j} G_{j}s_{j-1} \\
	& = - \Id \otimes ( H_{n-1}  -   \sum_{j=2}^{n-1}  (-1)^{n-1-j} \Id^{\otimes n-j} \otimes F_{j} G_{j}s_{j-1}).
	\end{align*}
\end{proof}

\begin{lemma} If $j$ is odd, then $$G_{n+1} \circ (\Id^{\otimes n-j+1} \otimes  F_{j} G_{j}  s_{j-1}) = G_{n+1} \circ (\Id^{\otimes n-j+2} \otimes  F_{j-1} G_{j-1}  s_{j-2}).$$
\end{lemma}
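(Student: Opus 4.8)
The plan is to evaluate both sides on a generator and then reduce everything to the combinatorics of Bardzell's comparison maps. Since both $G_{n+1}\circ(\Id^{\otimes n-j+1}\otimes F_jG_js_{j-1})$ and $G_{n+1}\circ(\Id^{\otimes n-j+2}\otimes F_{j-1}G_{j-1}s_{j-2})$ are $A$-$A$-bimodule maps $\overline C_n(A)\to B_{n+1}(A)$, it suffices to compare them on a generator $1\otimes v_1\otimes\cdots\otimes v_n\otimes 1$ with $v_i\in\rad A$. First I would compute the contracting homotopies: since $(v)_r=v$ for $v\in\rad A$, the map $s_{j-1}$ sends the suffix $v_{n-j+1}\otimes\cdots\otimes v_n\otimes 1$ to $1\otimes v_{n-j+1}\otimes\cdots\otimes v_n\otimes 1$, while $s_{j-2}$ sends $v_{n-j+2}\otimes\cdots\otimes v_n\otimes 1$ to $1\otimes v_{n-j+2}\otimes\cdots\otimes v_n\otimes 1$. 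Hence on the left I must understand $F_jG_j(1\otimes v_{n-j+1}\otimes\cdots\otimes v_n\otimes 1)$ and on the right $F_{j-1}G_{j-1}(1\otimes v_{n-j+2}\otimes\cdots\otimes v_n\otimes 1)$; the only difference between the two active blocks is whether the extra radical factor $v_{n-j+1}$ is processed inside the block or left in the prefix acted on by the identity.

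Next I would exploit the vanishing property established in the proof of the preceding lemma, namely that $G_m$ annihilates the submodule $X_m$ generated by the tensors whose last two radical factors do not multiply into $I$. Applied to $G_j$ and $G_{j-1}$ this shows that both sides vanish unless $v_{n-1}v_n\in I$, so I may assume this relation holds. The same property, applied now to the outer map $G_{n+1}$, is the real engine of the argument: expanding $F_j$ and $F_{j-1}$ through the explicit formulas of \cite[Section 3]{RR1} writes each active block as a sum of bar tensors $1\otimes u_1\otimes\cdots\otimes u_\bullet\otimes 1$, and after prepending $1\otimes v_1\otimes\cdots$ and applying $G_{n+1}$ only those summands whose final two factors lie in $I$ survive, the rest lying in $X_{n+1}$. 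Thus the identity reduces to checking that the surviving contributions of the degree-$j$ block and of the degree-$(j-1)$ block determine the same $(n+1)$-concatenation in $AP_{n+1}$, with the same coefficient, inside $B_{n+1}(A)$.

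The main obstacle, and the place where the hypothesis that $j$ is odd enters, is precisely this last combinatorial identification. In Bardzell's description the generators $AP_m$ alternate in nature with the parity of $m$, so a $j$-concatenation with $j$ odd and its associated $(j-1)$-concatenation differ only in whether the overlap at the left-hand boundary of the active block is saturated; when $j$ is odd the factor $v_{n-j+1}$ is forced to be absorbed into the surrounding concatenation detected by $G_{n+1}$, so that routing it through the block or leaving it in the prefix yields the same generator. I would make this precise by unwinding the definitions of $F$ and $G$ from \cite{RR1}, tracking the support paths and the induced insertion signs; the delicate point is verifying that the coefficients agree \emph{exactly}, not merely up to sign. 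An alternative and possibly cleaner formulation of the same argument is to show directly that, for $j$ odd, the image of the difference $\Id^{\otimes n-j+1}\otimes F_jG_js_{j-1}-\Id^{\otimes n-j+2}\otimes F_{j-1}G_{j-1}s_{j-2}$ lies in $X_{n+1}$, so that it is killed by $G_{n+1}$; this parallels the strategy already used in the previous lemma and isolates the parity phenomenon as the single combinatorial input.
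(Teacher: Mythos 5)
Your opening moves coincide with the paper's: evaluate both maps on a generator $1\otimes v_1\otimes\cdots\otimes v_n\otimes 1$, note that $s_{j-1}$ and $s_{j-2}$ merely re-insert a $1$ in front of the active block, and expand $F_jG_j$ and $F_{j-1}G_{j-1}$ through the explicit formulas of \cite{RR1}. But the mechanism you propose for discarding the discrepant terms is not the right one, and that is where the whole proof lives. By \cite{RR1}, $F_jG_j(1\otimes v_{n-j+1}\otimes\cdots\otimes v_n\otimes 1)=\sum_i L_iF_j(1\otimes w_i\otimes 1)R_i$, the sum running over the divisors $w_i\in AP_j$ of $v_{n-j+1}\cdots v_n$ with $s(w_i)<t(v_{n-j+1})$; the parity hypothesis enters through Bardzell's alternating structure: for $j$ odd, $\operatorname{Sub}(w_i)$ has exactly two elements, $w_i=L(\psi_i)\psi_i$, and $F_j(1\otimes w_i\otimes 1)=1\otimes L(\psi_i)F_{j-1}(1\otimes\psi_i\otimes 1)$. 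Consequently the summands that must be killed and the summand that survives all have tails of exactly the same shape $L(\psi_i)F_{j-1}(1\otimes\psi_i\otimes 1)R_i$; in particular their last two radical factors come from the image of $F_{j-1}$ and multiply into $I$. The criterion you invoke as the ``real engine'' --- membership in $X_{n+1}$, i.e.\ the product of the \emph{last} two factors not lying in $I$ --- therefore excludes nothing at all here and cannot separate surviving from non-surviving terms. (Your preliminary reduction via $X_j$, $X_{j-1}$ to the case $v_{n-1}v_n\in I$ is correct but carries no weight.)

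What actually discriminates is the left edge of the active block, and this is the case analysis missing from your outline. If $s(\psi_i)<t(v_{n-j+1})$, the two consecutive \emph{interior} factors $L_i$ and $L(\psi_i)$ concatenate to a nonzero path, $L_iL(\psi_i)\neq 0$, and it is this interior misalignment --- a property of $G_{n+1}$ from \cite{RR1} distinct from the last-two-factor property --- that forces $G_{n+1}$ to vanish on the term. If instead $s(\psi_i)\geq t(v_{n-j+1})$, then $\psi_i$ belongs to $AP_{j-1}$, divides $v_{n-j+2}\cdots v_n$, and has $s(\psi_i)$ minimal among such divisors; there is exactly one such $w_i$, and its contribution is literally the unique term $L_i'\otimes\psi_i\otimes R_i$ that $G_{j-1}$ produces on the right-hand side. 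The factorization of $F_j$ through $F_{j-1}$, this dichotomy, and the uniqueness matching are the content of the lemma; your proposal defers all of them to ``unwinding the definitions'' under a guiding principle that fails. In particular your ``cleaner alternative'' is false as stated: the image of the difference of the two maps does not lie in $X_{n+1}$ (its terms end in relations), even though it is annihilated by $G_{n+1}$. Finally, no signs arise anywhere --- the identification is term-by-term with coefficient $1$ --- so the concern about insertion signs is moot, and is itself a symptom that the term structure was never pinned down.
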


\begin{proof}      Let $1 \otimes   v_{1} \otimes \cdots \otimes v_n \otimes 1 \in A \otimes \rad A^n \otimes A$.   Using the notation in \cite[Section 3]{RR1}, by definition we have that
	$$F_{j} G_{j} (1 \otimes v_{n-j+1} \otimes \cdots \otimes v_n \otimes 1)= \sum_i L_i F_j (1 \otimes w_i \otimes 1) R_i$$
	for all $w_i\in AP_j$ dividing  $v_{n-j+1}  \cdots  v_n$ with $s(w_i) < t(v_{n-j+1})$. 
	Since $j$ is odd we know that $|\operatorname{Sub}(w_i)|=2$; let $\psi_i\in \operatorname{Sub}(w_i)$ such that $w_i=L(\psi_i)\psi_i$. Then
	$$F_{j} G_{j} (1 \otimes v_{n-j+1} \otimes \cdots \otimes v_n \otimes 1)=  \sum L_i\otimes L(\psi_i)F_{j-1}(1\otimes \psi_i \otimes 1)R_i.$$
	If  $s(\psi_i) < t(v_{n-j+1})$, then $L_iL(\psi_i)\neq 0$ and 
	$$ G_{n+1}(1 \otimes v_1 \otimes \cdots \otimes v_{n-j} \otimes L_i\otimes  L(\psi_i)F_{j-1}(1\otimes \psi_i\otimes 1)R_i) =0.$$ 
	On the other hand, if $w_i$ is such that $s(\psi_i) \geq t(v_{n-j+1})$, then $\psi_i $ belongs to $AP_{j-1}$  and $s(\psi_i)$ is minimal with respect to all divisors of 
	$v_{n-j+2}  \cdots  v_n$ in  $AP_{j-1}$.  Hence there is a unique $w_i$ like this, 
	$$G_{j-1} (1 \otimes v_{n-j+2} \otimes \cdots \otimes v_n \otimes 1)= L'_i \otimes \psi_i\otimes R_i$$
	and the lemma follows.
\end{proof}

\begin{lemma}\label{lem:anula1} For any $n \geq 1$ we have that
	$G_{n+1} H_n =0$.
\end{lemma}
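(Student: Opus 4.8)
The plan is to feed the two lemmas just proved into one another, reducing the whole statement to a single surviving summand that must then be killed by hand. First I would use the lemma expressing $G_{n+1}H_n$ as an alternating sum to write
\[
G_{n+1} H_n \;=\; \sum_{j=2}^{n} (-1)^{n-j}\, G_{n+1}\bigl(\Id^{\otimes n-j+1} \otimes F_j G_j s_{j-1}\bigr) \;=:\; \sum_{j=2}^{n} (-1)^{n-j}\, T_j .
\]
For $n=0$ and $n=1$ this sum is empty, so the statement holds at once and I may assume $n\geq 2$.

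Next I would invoke the lemma that, for odd $j$, identifies $T_j$ with $T_{j-1}$. Because the signs attached to two consecutive indices satisfy $(-1)^{\,n-(j-1)} = -(-1)^{\,n-j}$, each odd index $j$ cancels against the even index $j-1$: indeed the pair contributes $(-1)^{\,n-(j-1)}T_{j-1} + (-1)^{\,n-j}T_j = (-1)^{\,n-j}\,(T_j - T_{j-1}) = 0$. Grouping $j\in\{2,\dots,n\}$ into the consecutive blocks $(2,3),(4,5),\dots$, I see that when $n$ is odd the number $n-1$ of summands is even, every block cancels, and $G_{n+1}H_n=0$.

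The interesting case is $n$ even, where exactly one index is left over and
\[
G_{n+1}H_n \;=\; T_n \;=\; G_{n+1}\bigl(\Id \otimes F_n G_n s_{n-1}\bigr),
\]
so the real content of the lemma is that this top term vanishes. To handle it I would argue directly from the explicit comparison maps of \cite{RR1}: evaluating $F_nG_n$ on $1\otimes v_1\otimes\cdots\otimes v_n\otimes 1$ yields $\sum_i L_i\,F_n(1\otimes w_i\otimes 1)\,R_i$ with $w_i\in AP_n$, just as in the proof of the previous lemma. Prepending the unit from the $\Id$-slot and passing to $\overline C_{n+1}$, and remembering that $s$ keeps only radical parts, the summands attached to a minimal $w_i$ die because there $L_i$ is a vertex, whose radical part is zero. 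For the remaining summands I expect to verify that the resulting generator has its last two internal factors with product outside $I$, hence lies in the submodule $X_{n+1}$ from the proof of the alternating-sum lemma, on which $G_{n+1}$ vanishes. This final check---that adjoining a unit can never upgrade an $n$-concatenation to an $(n+1)$-concatenation seen by $G_{n+1}$---is the step I expect to be the main obstacle, being the only point at which the detailed combinatorics of Bardzell's comparison maps is genuinely used.
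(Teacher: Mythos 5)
Your reduction is exactly the paper's: invoke the alternating-sum lemma, cancel consecutive terms in pairs via the odd-$j$ lemma, conclude for odd $n$, and observe that for even $n$ the whole statement collapses to the single claim $G_{n+1}(\Id \otimes F_nG_ns_{n-1})=0$. Your sign bookkeeping in this part is correct. But that remaining claim is precisely the content of the lemma — it is the one fact the paper adds beyond the two preceding lemmas — and you do not prove it: you explicitly defer it as ``the step I expect to be the main obstacle.'' So the proposal has a genuine gap at its only substantive step.

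Moreover, the sketch you offer for that step would not go through as written. First, in even degree $G_n(1\otimes v_1\otimes\cdots\otimes v_n\otimes 1)$ is not the sum $\sum_i L_i F_n(1\otimes w_i\otimes 1)R_i$ over all dividing concatenations (that formula, with the condition $s(w_i)<t(v_1)$, is the one used for odd degree); it produces a single term $a\,F_n(1\otimes w\otimes 1)\,b$ where $w\in AP_n$ is the dividing concatenation whose source $s(w)$ is minimal. Second, your assertion that for the minimal $w_i$ the left factor $L_i$ is a vertex is false: minimality of $s(w_i)$ among dividing concatenations does not force $w_i$ to start at the source of $v_1\cdots v_n$, so in general $L_i$ is a nontrivial path in the radical and the term survives the application of $s_n$. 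Third, your fallback — that the surviving generators have their last two internal factors multiplying outside $I$ and hence lie in $X_{n+1}$ — is exactly the unverified claim, and it is not how the paper argues. The paper's argument is a minimality argument: if $G_{n+1}(1\otimes a\otimes u_1^i\otimes\cdots\otimes u_n^i\otimes 1)\neq 0$, there would exist a concatenation $z\in AP_{n+1}$ starting at a vertex strictly inside the path $a$, and truncating $z$ yields an $n$-concatenation dividing $v_1\cdots v_n$ whose source strictly precedes $s(w)$, contradicting the minimality built into the definition of $G_n$ in even degree. That use of minimality is the essential idea of the proof, and it is absent from your proposal.
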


\begin{proof} 
	The proof follows by the two previous lemmas, and the fact that 
	$$ G_{n+1} (\Id \otimes F_nG_n s_{n-1} )=0$$
	if $n$ is even, since
	\begin{align*}
	G_{n+1} (1 \otimes F_nG_n (1 \otimes v_1 \otimes \cdots \otimes v_n \otimes 1) )&= 
	G_{n+1} (1 \otimes a F_n (1 \otimes w \otimes 1) b ) \\
	& = \sum_i  G_{n+1} (1 \otimes a \otimes u_1^i \otimes \cdots \otimes u_n^i \otimes 1) u_{n+1}^i b
	\end{align*}
	where $w \in AP_n$ is such that $s(w)$ is minimal.  The non vanishing of $$G_{n+1} (1 \otimes a \otimes u_1^i \otimes \cdots \otimes u_n^i \otimes 1)$$ would imply the existence of $z=(p_1, \cdots, p_n) \in AP_{n+1}$ starting in some vertex inside the path $a$ (not the last one). Then $(p_1, \cdots, p_{n-1})$ would contradict the minimality of $s(w)$. 
\end{proof}

\begin{lemma}\label{lem:anula2} For $n \geq 1$ we have 
	$H_nF_n =0$ and $H_{n+1} H_n=0.$
	Moreover, $(\Id \otimes H_n)F_{n+1}=0$ and $(\Id \otimes H_n)H_n=0$.
\end{lemma}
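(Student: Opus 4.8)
The plan is to derive all four identities from the two presentations of $H_n$ — the closed form $H_n = s_nF_nG_n - s_nH_{n-1}d_n$ of Lemma~\ref{H} and the reduced recursion \eqref{radical} — together with the two facts already in hand: $G_nF_n=\Id$ (the chain-level content of $F^*G^*=\Id$) and $G_{n+1}H_n=0$ (Lemma~\ref{lem:anula1}). Before starting I would record three bookkeeping observations about the reduced homotopy $s$. First, since $H_n = s_n(F_nG_n - H_{n-1}d_n)$ factors through $s_n$ and $s_{n+1}s_n=0$, one has $s_{n+1}H_n = 0$. Second, on a generator the comparison map has the shape $F_n(1\otimes w\otimes 1) = \sum_i 1\otimes u_1^i\otimes\cdots\otimes u_n^i\otimes u_{n+1}^i$ with leading factor the vertex $t(w)\in E$ (this is exactly the form used in the proof of Lemma~\ref{lem:anula1}), so that $s_nF_n = 0$ on generators. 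Third, comparing the two presentations of $H_{n+1}$ term by term yields the operator identities
$$\Id\otimes H_n \;=\; s_{n+1}H_nd_{n+1}\;=\;s_{n+1}F_{n+1}G_{n+1}-H_{n+1},$$
which are what let me handle the two statements involving $\Id\otimes H_n$.

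For $H_nF_n=0$ I would induct on $n$, evaluating on a generator and using $G_nF_n=\Id$ together with the chain-map identity $d_nF_n=F_{n-1}\delta_n$:
$$H_nF_n \;=\; s_nF_n(G_nF_n) - s_nH_{n-1}(d_nF_n) \;=\; s_nF_n - s_n(H_{n-1}F_{n-1})\delta_n.$$
The first summand is $0$ by the leading-vertex observation and the second is $0$ by the inductive hypothesis $H_{n-1}F_{n-1}=0$; the base case is immediate since $H_0=0$. The identity $H_{n+1}H_n=0$ is then instantaneous: in
$$H_{n+1}H_n \;=\; s_{n+1}F_{n+1}\,(G_{n+1}H_n) - (s_{n+1}H_n)\,(d_{n+1}H_n)$$
the first term dies by Lemma~\ref{lem:anula1} and the second by $s_{n+1}H_n=0$.

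The two remaining identities follow by inserting the operator identities for $\Id\otimes H_n$. Using $\Id\otimes H_n = s_{n+1}H_nd_{n+1}$ and $d_{n+1}F_{n+1}=F_n\delta_{n+1}$,
$$(\Id\otimes H_n)F_{n+1} \;=\; s_{n+1}H_n\,d_{n+1}F_{n+1} \;=\; s_{n+1}(H_nF_n)\,\delta_{n+1} \;=\;0$$
by the already-proved $H_nF_n=0$. Using instead $\Id\otimes H_n = s_{n+1}F_{n+1}G_{n+1}-H_{n+1}$,
$$(\Id\otimes H_n)H_n \;=\; s_{n+1}F_{n+1}\,(G_{n+1}H_n) - H_{n+1}H_n \;=\; 0,$$
the first term by Lemma~\ref{lem:anula1} and the second by the previous paragraph.

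The real work is not in these one-line cancellations but in justifying the bookkeeping of the first paragraph inside the reduced complex $\overline C_*(A)$: one must be careful about which tensor slot each factor occupies under the identification implicit in the symbol $\Id\otimes H_n$, and must use repeatedly that a factor landing in $E$ in a radical slot forces a term to vanish (equivalently, that $s$ annihilates anything with a vertex leading factor). Verifying the operator identities $\Id\otimes H_n = s_{n+1}H_nd_{n+1} = s_{n+1}F_{n+1}G_{n+1}-H_{n+1}$ — that is, that the reduced recursion \eqref{radical} is the term-by-term image of the closed form of Lemma~\ref{H} — is the step I expect to demand the most care, and the explicit formulas of Remark~\ref{cuentas} serve as a convenient sanity check in low degrees. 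Finally I would note the payoff: $H_nF_n=0$ and $H_{n+1}H_n=0$ are precisely the chain-level incarnations of $F^*H^*=0$ and $H^*H^*=0$, so together with $G_{n+1}H_n=0$ they supply all the vanishing conditions required for the contraction and for Remark~\ref{solo v}.
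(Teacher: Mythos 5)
Your proofs of the first and third identities are sound: the first is essentially the paper's argument verbatim, and for the third you use the chain-map relation $d_{n+1}F_{n+1}=F_n\delta_{n+1}$ where the paper instead uses the recursive shape of $F_{n+1}(1\otimes w\otimes 1)$, and both routes work. The gap is your first bookkeeping observation, and it brings down the second identity and, with it, the fourth. The claim that $s_{n+1}H_n=0$ as an operator identity is false: $H_n$ does \emph{not} factor through $s_n$ globally. The formula $H_n=s_n(F_nG_n-H_{n-1}d_n)$ of Lemma~\ref{H} defines $H_n$ only on elements $1\otimes x$ with leading factor $1$; elsewhere $H_n$ is the $A$-$A$-bilinear extension, and since $s_n$ is only right $A$-linear, the extension destroys the factorization. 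Concretely, $H_n(a\otimes x)=a\cdot H_n(1\otimes x)=a\otimes z$, so $s_{n+1}H_n(a\otimes x)=1\otimes a_r\otimes z$, which is nonzero whenever $a_r\neq 0$ and $H_n(1\otimes x)\neq 0$; for instance, by Remark~\ref{cuentas}, $s_2H_1(\beta\otimes\alpha_1\alpha_2\otimes 1)=1\otimes\beta\otimes\alpha_1\otimes\alpha_2\otimes 1$ for composable arrows with $\alpha_1\alpha_2\notin I$. What is true is only the restricted statement $s_{n+1}H_n(1\otimes x)=0$; the same restriction applies to your two displayed ``operator identities'' $\Id\otimes H_n=s_{n+1}H_nd_{n+1}=s_{n+1}F_{n+1}G_{n+1}-H_{n+1}$, which hold only on leading-factor-$1$ elements (fortunately, that is how you actually use them in the third and fourth arguments).

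The failure occurs exactly where you invoke $s_{n+1}H_n=0$: after Lemma~\ref{lem:anula1} kills the first term, you are left with $H_{n+1}H_n(1\otimes x)=-s_{n+1}H_nd_{n+1}H_n(1\otimes x)$, and the inner element $d_{n+1}H_n(1\otimes x)=d_{n+1}(1\otimes y)$ is \emph{not} of leading-factor-$1$ type: its zeroth face $y_1\otimes y_2\otimes\cdots$ has leading factor $y_1\in\rad A$, and there $s_{n+1}H_n$ does not vanish. In fact, applying your own identity $\Id\otimes H_n=s_{n+1}H_nd_{n+1}$ to the legitimate leading-factor-$1$ element $1\otimes y$ gives $s_{n+1}H_nd_{n+1}H_n(1\otimes x)=(\Id\otimes H_n)H_n(1\otimes x)$, i.e.\ the very quantity of the fourth identity. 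So your two computations prove one and the same relation, $H_{n+1}H_n=-(\Id\otimes H_n)H_n$, read in both directions; deriving the fourth identity from the second is then circular, and neither vanishing is established. The paper avoids this by a simultaneous induction on all four equalities: in the problematic term it substitutes the homotopy relation $d_{n+1}H_n=F_nG_n-\Id-H_{n-1}d_n$ (a genuine operator identity, all maps involved being bimodule maps), after which the three resulting pieces vanish by $H_nF_n=0$, by $s_{n+1}H_n(1\otimes x)=0$ applied to an honest leading-factor-$1$ element, and by the inductive hypothesis $H_nH_{n-1}=0$. Your proposal is missing precisely this inductive step for $H_{n+1}H_n$; once it is inserted, your argument for $(\Id\otimes H_n)H_n=0$ does become valid.
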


\begin{proof} The proof will be done by induction.  All the equalities hold for $n=1$.
	By definition $F_n (1 \otimes w \otimes 1) = 1 \otimes x$ for some $x$.  Then, using that $G_nF_n = \Id$ and the inductive hypothesis on the first equality, we get that
	\begin{align*} 
	H_n F_n (1 \otimes w \otimes 1) & = s_n F_n G_n F_n (1 \otimes w \otimes 1) - s_n H_{n-1} d_n F_n (1 \otimes w \otimes 1) \\
	& = s_n F_n (1 \otimes w \otimes 1) - s_n H_{n-1}  F_{n-1} d_{n-1}(1 \otimes w \otimes 1) =0.
	\end{align*} 
	Since $H_n (1 \otimes x) = 1 \otimes y$ for some $y$, 
	\begin{align*} 
	H_{n+1} H_n (1 \otimes x)  & = s_{n+1} F_{n+1} G_{n+1} H_n (1 \otimes x) - s_{n+1} H_n d_{n+1} H_n (1 \otimes x) \\
	& = - s_{n+1} H_n   (- H_{n-1} d_n + F_n G_n - \Id)  (1 \otimes x) =0.
	\end{align*} 
	Finally,  
	\begin{align*}
	(\Id \otimes H_n)F_{n+1} (1 \otimes w \otimes 1) & = (\Id \otimes H_n) (L(\psi) F_n (1 \otimes \psi \otimes 1) )= L(\psi) H_nF_n (1 \otimes \psi \otimes 1)=0; \\
	(\Id \otimes H_n)H_n (1 \otimes x) &= (\Id \otimes H_n)(1 \otimes F_nG_n(1 \otimes x) - 1\otimes H_{n-1}(x))=0.
	\end{align*}
\end{proof}

\begin{remark}\label{bardzell infinity}
	By the previous lemmas, the Hochschild complex $\overline C^*(A)$ and Bardzell's complex $B^*(A)$ satisfy the hypothesis of   \eqref{homotopy} and \eqref{ceros} used in Subsection \ref{subsec:Linf}. 
\end{remark}

\section{$L_\infty$-structure on Bardzell's complex}\label{sec:Barzdell}

All the algebras in this section are quotients of path algebras $\K Q/I$. The ideal $I$ is generated by a set $\R$ of paths that are minimal with respect to inclusion of paths, and $E=\K Q_0$.

We start this section with one of our main results, which can be deduced immediately from Remark \ref{bardzell infinity} and Theorem \ref{infinito}. Recall that a weak $L_\infty$-morphism $\phi=(\phi_n)$ is a weak equivalence if $\phi_1$ is a quasi-isomorphism.

\begin{theorem} \label{infinito-bardzell} For any monomial algebra $A$, let $B= B^*(A)[1]$ and $C=\overline C^*(A)[1]$ be Bardzell's and Hochshild complex, respectively.  The maps  $l_n:\otimes^n B\to B$, $n\in \N$, defined in Subsection \ref{subsec:Linf},  give $B$  a $L_\infty$-structure, and the quasi-isomorphism $G$ extends to a weak $L_\infty$-equivalence $\phi: B \to C$.
\end{theorem}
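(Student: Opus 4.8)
The plan is to deduce Theorem \ref{infinito-bardzell} as a direct application of the abstract transfer result, Theorem \ref{infinito}, to the specific contraction arising from the comparison morphisms between Bardzell's resolution and the reduced Hochschild (bar) resolution. The abstract theorem takes as input a dg-Lie algebra $C$, a cochain complex $B$, and a contraction $(F^*, G^*, H^*)$ satisfying the conditions \eqref{vanish}, namely $F^*G^* = \Id$, $G^*F^* - \Id = H^*d + dH^*$, and $F^*H^* = 0$; it then produces the maps $l_n$ giving $B$ an $L_\infty$-structure and a weak $L_\infty$-morphism $\phi$. So the entire work is to verify that the concrete data at hand satisfies these hypotheses.

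First I would set $C = \overline C^*(A)[1]$, which is a dg-Lie algebra under the Gerstenhaber bracket, and $B = B^*(A)[1]$, the shifted Bardzell complex, which is a cochain complex. The comparison morphisms $F^*, G^*$ and the homotopy $H^*$ are exactly those constructed in Section \ref{Sec 3}: they arise from the chain-level maps $F_*, G_*, H_*$ by the dualization rules $F^n(f) = fF_n$, $G^n(f) = fG_n$, and $(-1)^{n-1}H^n(f) = fH_{n-1}$. By Remark \ref{bardzell infinity}, the lemmas of Section \ref{Sec 3} (Lemmas \ref{lem:anula1} and \ref{lem:anula2} together with the equality $F^*G^* = \Id$ established in \cite{RR1}) show that these satisfy all the conditions in \eqref{homotopy} and \eqref{ceros}. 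In particular the three conditions required by Theorem \ref{infinito} hold, since \eqref{ceros} even gives the stronger vanishing $F^*H^* = 0$, $H^*G^* = 0$, $H^*H^* = 0$.

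Having checked the hypotheses, I would simply invoke Theorem \ref{infinito} to conclude that the maps $l_n$ defined in Subsection \ref{subsec:Linf} endow $B$ with an $L_\infty$-structure and that $\phi = (\phi_n)$ is a weak $L_\infty$-morphism extending $\phi_1 = G$. The only remaining point is to upgrade ``weak $L_\infty$-morphism'' to ``weak $L_\infty$-equivalence.'' For this I would recall that a weak $L_\infty$-morphism is an equivalence precisely when its linear component $\phi_1$ is a quasi-isomorphism. Here $\phi_1 = G^*$ is the comparison morphism induced by lifting the identity on $A$ between two projective resolutions of the $A$-bimodule $A$; since both $B^*(A)$ and $\overline C^*(A)$ compute the Hochschild cohomology $\HH(A)$ and $G^*$ induces the identity on cohomology, $G^*$ is a quasi-isomorphism. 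Therefore $\phi$ is a weak equivalence.

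There is genuinely no obstacle of substance in this proof, as the hard analytic and combinatorial content has been discharged earlier: the recursive sign bookkeeping and the Jacobi-identity manipulations live in Lemma \ref{induccion} and its supporting lemmas (which prove Theorem \ref{infinito}), while the resolution-theoretic verification of the vanishing conditions lives in the lemmas of Section \ref{Sec 3}. If any step deserves a second glance, it is confirming that the dualization of the chain-level relations preserves the signs and vanishing needed for \eqref{vanish} — one must check that the degree shift $[1]$ and the sign $(-1)^{n-1}$ in the definition of $H^*$ do not spoil $F^*H^* = 0$ — but this is immediate from the corresponding chain-level identities $H_nF_n = 0$ and the $A$-linearity of the maps. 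Thus the theorem follows formally.
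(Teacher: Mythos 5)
Your proposal is correct and follows exactly the paper's own route: the authors deduce Theorem \ref{infinito-bardzell} immediately from Remark \ref{bardzell infinity} (which packages the vanishing conditions $F^*G^*=\Id$, $F^*H^*=0$, $H^*G^*=0$, $H^*H^*=0$ established via Lemmas \ref{lem:anula1} and \ref{lem:anula2} and the dualization rules) together with the abstract transfer result, Theorem \ref{infinito}. Your additional observation that $\phi_1=G^*$ is a quasi-isomorphism—being induced by a comparison morphism between projective resolutions lifting the identity—is precisely what the paper takes for granted in calling $G$ a quasi-isomorphism, so nothing is missing.
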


We present two examples that show that the behaviour of the  $L_\infty$-structure of $B^*(A)[1]$ may be complicated, and not nilpotent in general.

Recall from  \cite[page 17] {RR}  that a basis of  $B^n(A) = \Hom_{E-E}(\K AP_n, A)$ is given by the set of $\K$-linear maps $(w \vert \vert  \gamma)$ where $w  \in AP_n$, $\gamma$ is a path in $Q$, $\gamma  \not \in I$, $w$ is parallel to $\gamma$,  and
\[
(w \vert \vert \gamma)(\rho) = \begin{cases}
\overline \gamma, \qquad &\mbox{if $\rho=w$}; \\
0, & \mbox{otherwise.}
\end{cases}
\]

\begin{example} 
	Let $A=\K Q/I$ be the quadratic algebra given by      
	\[ \xymatrix{ Q: \hspace{2.5cm}
		& & 5 \ar[rd]^{\beta_2}  \ar[rr]^{\delta}  & & 4  \ar[rd]^{\gamma_2} \\
		1  \ar[r]^{\alpha_1} &    \ar@/^3.2pc/[rrru]^{\mu}   \ar[ru]_{\beta_1} 2 \ar[rr]^{\alpha_2} &   &  \ar[ru]^{\gamma_1}  3  \ar[rr]^{\alpha_3} & & 6
	} \] 
	and $I=< \alpha_1 \alpha_2, \alpha_2 \alpha_3, \beta_2 \alpha_3, \beta_2 \gamma_1, \beta_1 \delta>$.  Let
	$$f= (\alpha_1 \alpha_2 || \alpha_1 \beta_1 \beta_2) + (\beta_1 \delta || \alpha_2 \gamma_1 + \mu ) + (\beta_2 \gamma_1 || \delta) + ( \beta_2 \alpha_3 || \delta \gamma_2).$$
	In this case
	\[l_n (f^{\otimes n}) = \begin{cases} 
	(-1)^{\frac{(q-1)(q-2)}{2}}n!  \  ( (\alpha_1 \alpha_2, \alpha_2 \alpha_3) ||\alpha_1 \mu \gamma_2) & \mbox{if $n=3q$,} \\
	0  & \mbox{otherwise.} \
	\end{cases}\]
	
\end{example}

\begin{example}
	Let $A=\K Q/I$ be the quadratic algebra given by 
	\[ \xymatrix{ 
		Q: \qquad & 1   \ar[r]_{\alpha_1} &  \ar@/^0.5pc/[r]^\beta 2 \ar@/_0.5pc/[r]_{\alpha_2}    &  \ar@/^0.5pc/[r]^\gamma  3  \ar@/_0.5pc/[r]_{\alpha_3} & 4
	} \] 
	and $I=< \alpha_1 \alpha_2, \alpha_2 \alpha_3, \beta \gamma>$. Then $B^*(A) = B^0(A) \oplus B^1(A) \oplus B^2(A)$.
	Let 
	$$f=(\alpha_1 \alpha_2 || \alpha_1 \beta) + (\alpha_2 \alpha_3 || \alpha_2 \gamma) + (\beta \gamma || \alpha_2 \gamma + \beta \alpha_3) \in B^2(A).$$
	In this case
	\[l_n (f^{\otimes n}) = \begin{cases} 
	(-1)^{\frac{n-1}{2}}n!  \  ( (\alpha_1 \alpha_2, \alpha_2 \alpha_3) || \alpha_1 \beta \alpha_3), & \mbox{if $n$ is odd;} \\
	0,  & \mbox{otherwise.} \\
	\end{cases}\]
\end{example}

Recall that if $L$ is a $L_\infty$-algebra, the set $\MC(L)$ of Maurer-Cartan elements consists of all $f \in L^1$ satisfying the generalized Maurer-Cartan equation
$$l_1 (f) - \sum_{n \geq 2} (-1)^{\frac{(n+1)n}{2}} \frac{1}{n!} l_n(f\otimes \cdots \otimes f) =0.$$
Given an algebra $A$, it is well known that the formal deformations of $A$ over $\K [[t]]$ are in one-to-one correspondence with equivalence classes of Maurer-Cartan elements in $\MC(\overline C^*(A)[1] \otimes ((t)))$, see for instance \cite[\S 5]{DMZ}.
In the particular case of monomial algebras, Theorem \ref{infinito-bardzell}  and  \cite[Theorem 7.8]{DMZ}  implies that 
$$\MC (\overline C^*(A)[1] \otimes ((t))) \simeq \MC (B^*(A)[1] \otimes ((t)))$$
where  $B^*(A)[1]= (B^{n+1}(A), -(-1)^{n+1} \delta^{n+1}, l_n)$ and  $l_1 = - \delta^2$.  In this case $f = \sum_{i \geq 1} f_i t^i$ with $f_i \in B^1(A)[1]$ satisfies the 
generalized Maurer-Cartan equation if 
$$-  \delta^2(f_i) - \sum_{n \geq 2}  \sum_{j_1 + \cdots + j_n=i} (-1)^{\frac{(n+1)n}{2}} \frac{1}{n!} l_n (f_{j_1} \otimes \cdots \otimes f_{j_n}) =0.$$

In order to get some general results concerning the $L_\infty$-structure of $B^*(A)[1]$ we continue with some assertions that, using the inductive definition of $l_n$,
should allow us to find Maurer-Cartan elements in some particular cases.

\begin{remark}\label{lem: trunc} Let $f_1, f_2\in B^{1}(A)[1] =  \Hom_{E-E}( \K AP_{2}, A)$. Then 
	$$l_2(f_1\otimes f_2) = F^{3} [G^{2}(f_1), G^{2}(f_2)] =  \sum_{i=1}^2 G^2 (f_i) (G^2 (f_{2-i}) \otimes \Id - \Id \otimes G^2 (f_{2-i}) )F_3. $$
\end{remark}
Observe that $\vert \phi_n (f_1 \otimes \cdots \otimes f_n)\vert =1$ when $\vert f_i \vert =1$ for all $i$.  Hence \eqref{circ} is the formula we use in the following results when computing $[\phi_t, \phi_{n-t}]$ in the shifted complex $\overline C^*(A)[1]$.

\begin{lemma}\label{v_n anula} Let $f_1, f_2, \cdots, f_n \in B^{1}(A)[1] = \Hom_{E-E}( \K AP_{2}, A)$, $\alpha, \beta \in Q_1$ and $v \in \rad A$. Assume that 
	$\alpha v$ and $v \beta$ are non zero or correspond to  paths in $\R$.  Then, for any $n \geq 2$,
	$$\phi_n (f_1 \otimes \cdots \otimes f_n) (1 \otimes \alpha \otimes v \otimes 1)=0 = \phi_n (f_1 \otimes \cdots \otimes f_n) (1 \otimes v \otimes \beta \otimes 1).$$
\end{lemma}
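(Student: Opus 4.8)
The plan is to establish, by induction on $n$, two vanishing statements that together imply the lemma, and to exploit the simplification $\phi_n=H^*v_n$. Indeed, Lemmas \ref{lem:anula1} and \ref{lem:anula2} give $H^*G^*=0$ and $H^*H^*=0$, so Remark \ref{solo v} yields $H^*u_n=0$ and hence $\phi_n=H^*v_n$; thus only $v_n$ and the homotopy $H$ need to be understood. By linearity I may assume $v=\alpha_1\cdots\alpha_m$ is a single path. I would rely on two basic facts: first, $G^2(g)(y_1\otimes y_2)=g\big(G_2(1\otimes y_1\otimes y_2\otimes 1)\big)=0$ whenever $y_1y_2\notin I$, since $G_2$ vanishes on such generators (the $X_m$-argument in Section \ref{Sec 3}); second, the homotopy maps $H_1,H_2$ merely regroup the tensor factors of a fixed path without changing its total concatenation, as is visible in Remark \ref{cuentas} and follows from the combinatorial description of $F_*,G_*$ in \cite{RR1}.

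Introduce the auxiliary claim AUX($n$): $\phi_n(g_1\otimes\cdots\otimes g_n)(y_1\otimes y_2)=0$ for all $y_1,y_2\in\rad A$ with $y_1y_2\notin I$; and the companion KEY($n$): $v_n(g_1\otimes\cdots\otimes g_n)(x_1\otimes x_2\otimes x_3)=0$ whenever $x_1x_2x_3$ is nonzero or lies in $\R$. The base AUX($1$) is exactly the $G_2$-vanishing above. For the inductive step I would assume AUX($s$) for all $s<n$ and first prove KEY($n$): writing $v_n=\sum_{t}\sum_{\tau}\chi(\tau)\kappa(\tau)_t\,[\phi_t,\phi_{n-t}]\hat\tau$ and expanding each bracket on $x_1\otimes x_2\otimes x_3$ via \eqref{circ}, every one of the four resulting summands carries an inner factor of the form $\phi_s(x_1\otimes x_2)$ or $\phi_s(x_2\otimes x_3)$ with $s\in\{t,n-t\}\subset\{1,\dots,n-1\}$. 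Since $x_1x_2x_3$ is good, both $x_1x_2$ and $x_2x_3$ are nonzero (subpaths of a nonzero path, or proper subpaths of a minimal relation), so AUX($s$) kills each inner factor; hence each bracket vanishes individually and the signs $\chi(\tau),\kappa(\tau)_t$ are irrelevant.

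To deduce AUX($n$) from KEY($n$) I would use $\phi_n=H^*v_n$, which gives $\phi_n(g_1\otimes\cdots\otimes g_n)(1\otimes y_1\otimes y_2\otimes 1)=v_n(g_1\otimes\cdots\otimes g_n)\big(H_2(1\otimes y_1\otimes y_2\otimes 1)\big)$; since $H_2$ preserves the total concatenation, each resulting term $1\otimes z_1\otimes z_2\otimes z_3\otimes w$ has $z_1z_2z_3$ a subpath of $y_1y_2\notin I$, hence nonzero, and KEY($n$) finishes. Granting AUX($n$) and KEY($n$), the two assertions follow. When $\alpha v$ (respectively $v\beta$) is nonzero this is a special case of AUX($n$). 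When $\alpha v\in\R$ I again write $\phi_n=H^*v_n$ and expand $H_2(1\otimes\alpha\otimes v\otimes 1)=\sum_{j=2}^m 1\otimes\alpha\otimes\alpha_1\cdots\alpha_{j-1}\otimes\alpha_j\otimes\alpha_{j+1}\cdots\alpha_m$ by Remark \ref{cuentas}(2): the first three factors of each term concatenate to the prefix $\alpha\alpha_1\cdots\alpha_j$ of $\alpha v$, which is nonzero for $j<m$ (minimality of the relation) and equals $\alpha v\in\R$ for $j=m$; in both cases it is good, so KEY($n$) gives vanishing. The input $1\otimes v\otimes\beta\otimes 1$ is treated the same way through the corresponding expansion of $H_2$.

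I expect the main obstacle to be precisely this last homotopy bookkeeping on $1\otimes v\otimes\beta\otimes 1$: here the first tensor slot $v$ is a general path rather than a single arrow, so Remark \ref{cuentas}(2) does not apply verbatim and one must use the general recursive form \eqref{radical} of $H_2$ (equivalently, the concatenation-preserving property inherited from the comparison maps of \cite{RR1}) to guarantee that the produced factors are again anchored by arrows and have good first-three-factor concatenations, so that KEY($n$) can be invoked. Verifying the same concatenation property for arbitrary $y_1,y_2$ is also what underlies the step from KEY($n$) to AUX($n$). The remaining work is routine: the Koszul and unshuffle signs in the bracket expansion never enter the vanishing argument, since every summand is shown to vanish on its own.
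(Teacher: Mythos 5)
Your proposal is correct and rests on the same engine as the paper's proof: the reduction $\phi_n=H^*v_n$ (via Lemmas \ref{lem:anula1}, \ref{lem:anula2} and Remark \ref{solo v}), induction on $n$, and the expansion of $v_n$ through \eqref{circ}, so that every summand vanishes because some inner factor $\phi_s$ is evaluated on a pair of consecutive slots whose concatenation is not in $I$. The organization differs, though. The paper inducts on the statement of the lemma itself (first slot an arrow): the summands whose inner $\phi_{n-t}$ sits on the first two slots are killed by the inductive hypothesis, those where it sits on the last two slots are killed by the identity $(\Id\otimes H_2)H_2=0$ of Lemma \ref{lem:anula2} (after rewriting $\phi_{n-t}=H^*v_{n-t}$), and the inner-$\phi_1$ terms by the vanishing of $G_2$. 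Your strengthened pair AUX($n$)/KEY($n$) instead makes all four summands die uniformly by AUX($s$), $s<n$; this is cleaner, since no homotopy identity beyond the reduction $\phi_n=H^*v_n$ is needed inside the induction, and the price you pay --- controlling $H_2(1\otimes y_1\otimes y_2\otimes 1)$ for an arbitrary path $y_1$ --- is affordable exactly as you say, because the $G_2$-component of \eqref{radical} vanishes when $y_1y_2\notin I$ and the $H_1$-component merely regroups. Finally, the ``main obstacle'' you flag for $1\otimes v\otimes\beta\otimes 1$ is illusory, and this is where the paper is much quicker: by \eqref{radical}, $H_1(v\otimes\beta\otimes 1)=v\,H_1(1\otimes\beta\otimes 1)=0$ because $\beta$ is a single arrow, and the component $1\otimes F_2G_2(1\otimes v\otimes\beta\otimes 1)$ also vanishes, since $F_2G_2(1\otimes v\otimes\beta\otimes 1)$ is either zero (when $v\beta\notin I$) or equals $F_2(1\otimes v\beta\otimes 1)$, whose leftmost tensor factor lies in $E$ and is therefore annihilated by $s_2$; hence $H_2(1\otimes v\otimes\beta\otimes 1)=0$ and the second equality holds with no induction at all, which is precisely the paper's one-line disposal of that case.
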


\begin{proof} It is clear that $ \phi_n (f_1 \otimes \cdots \otimes f_t) (1 \otimes v \otimes \beta \otimes 1)=0$ since $H_2( 1 \otimes v \otimes \beta \otimes 1)=0$.  For the other equation, we will proceed by induction on $n$. For $n=2$,
	if $v= \beta_1 \cdots \beta_s$, 
	$$H_2 (1 \otimes \alpha \otimes v \otimes 1)  = \sum_{j=2}^s 1 \otimes \alpha \otimes \beta_1\dots \beta_{j-1}\otimes \beta_j \otimes \beta_{j+1}\dots \beta_s.$$
	Thus, since $G_2(1 \otimes \alpha \otimes   \beta_1\dots \beta_{j-1} \otimes 1)=0=G_2(1 \otimes  \beta_1\dots \beta_{j-1} \otimes \beta_{j} \otimes 1)$, we get
	$$\phi_2(f_1,f_2)  (1 \otimes \alpha \otimes v  \otimes 1)= [G^2f_1,G^2f_2] H_2(1 \otimes \alpha \otimes v  \otimes 1)=0.$$ 
	For the inductive step,  we have that 
	$$\phi_n= H^2 v_n = H^2  \sum_{t=1}^{n-1} \sum_{\tau \in \s^{-}_{t,n-t}}  \chi(\tau) \kappa(\tau)_t \ [\phi_t, \phi_{n-t}] \hat \tau.$$
	Hence, for any $n-t \geq 2$, the inductive hypothesis and Lemma \ref{lem:anula2} imply that
	\begin{align*}
	\phi_t (\phi_{n-t} \otimes \Id & - \Id \otimes \phi_{n-t}) (f_{\tau(1)} \otimes \cdots \otimes f_{\tau(n)})H_2 (1 \otimes \alpha \otimes v  \otimes 1)   \\
	=& \phi_t (\phi_{n-t} \otimes \Id) (f_{\tau(1)} \otimes \cdots \otimes f_{\tau(n)}) H_2 (1 \otimes \alpha \otimes v  \otimes 1) \\
	& -\phi_t (\Id \otimes v_{n-t}) (f_{\tau(1)} \otimes \cdots \otimes f_{\tau(n)})  (\Id \otimes H_2) H_2  (1 \otimes \alpha \otimes v \otimes 1) =0.
	\end{align*}
	Finally, 
	$$
	\phi_{n-1}(\phi_1 \otimes \Id   - \Id \otimes \phi_1) (f_{\tau(1)} \otimes \cdots \otimes f_{\tau(n)})H_2 (1 \otimes \alpha \otimes v  \otimes 1)=0$$
	since $\phi_1(f)=f G_2$ and  $G_2(1 \otimes \alpha \otimes  \beta_1\dots \beta_{j-1} \otimes 1)=0=G_2(1 \otimes  \beta_1\dots \beta_{j-1} \otimes \beta_j \otimes 1)$.
	
\end{proof}

\begin{proposition} \label{veremos} Let $f_1,  \cdots, f_n \in B^{1}(A)[1] = \Hom_{E-E}( \K AP_{2}, A)$, $\alpha, \beta \in Q_1$, $v \in \rad A$.  Then, for any $n \geq 3$, 
	$$v_n (f_1 \otimes \cdots \otimes f_n)(1 \otimes \alpha \otimes v \otimes \beta \otimes 1) =\begin{cases} 0, & \mbox{if $\alpha v, v \beta \not =0$}; \\
	A,  & \mbox{if $\alpha v \in \R, v \beta \not =0$}; \\
	-B,  & \mbox{if $\alpha v \not =0, v \beta \in \R$}; \\
	A-B, & \mbox{if $\alpha v, v \beta \in \R$};
	\end{cases}$$
	where 
	\begin{align*}
	A & =   (-1)^n  \sum_{i=1}^n \phi_{n-1} (f_1 \otimes \cdots \otimes  \hat f_i \otimes \cdots \otimes f_n ) (1 \otimes f_i(\alpha v)  \otimes  \beta \otimes 1), \mbox{ and} \\
	B  &=  (-1)^{n} \sum_{i=1}^n \phi_{n-1} (f_1 \otimes \cdots \otimes  \hat f_i \otimes \cdots \otimes f_n ) (1 \otimes \alpha  \otimes f_i(v \beta) \otimes 1). 
	\end{align*}
\end{proposition}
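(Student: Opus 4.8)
The plan is to substitute the definition \eqref{eq: vn} of $v_n$ and expand every bracket $[\phi_t,\phi_{n-t}]$ by means of \eqref{circ}; this is legitimate because, as noted before Lemma \ref{v_n anula}, each $\phi_m(f_{j_1}\otimes\cdots\otimes f_{j_m})$ is homogeneous of degree $1$ in the shifted complex, so both factors are $2$-cochains. Writing $g=\phi_t(f_{\tau(1)}\otimes\cdots\otimes f_{\tau(t)})$ and $h=\phi_{n-t}(f_{\tau(t+1)}\otimes\cdots\otimes f_{\tau(n)})$, the value of $[g,h]$ on $1\otimes\alpha\otimes v\otimes\beta\otimes 1$ splits into four summands in which the \emph{inner} cochain is evaluated on one of the $2$-chains $1\otimes\alpha\otimes v\otimes 1$ or $1\otimes v\otimes\beta\otimes 1$, and the \emph{outer} one is then applied to the result together with the remaining arrow.

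The first real step is to delete the vanishing summands. By Lemma \ref{v_n anula}, for every $m\geq 2$ one has $\phi_m(\cdots)(1\otimes\alpha\otimes v\otimes 1)=0$ (here we use that $\alpha v$ is nonzero or lies in $\R$) and $\phi_m(\cdots)(1\otimes v\otimes\beta\otimes 1)=0$ (unconditionally, as $\beta\in Q_1$ makes $H_2(1\otimes v\otimes\beta\otimes 1)=0$). Consequently every summand whose inner factor is a $\phi_m$ with $m\geq 2$ dies, and the only surviving contributions are those in which the inner factor is $\phi_1=G^2$. For $n\geq 3$ this forces $t\in\{1,n-1\}$, two distinct values, and in each case only the half of the bracket \eqref{circ} having $\phi_{n-1}$ as the outer map survives.

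It remains to collect these terms and to evaluate $\phi_1$. Since $\s^{-}_{1,n-1}=\{\Id\}$, the case $t=1$ contributes the single outer factor $\phi_{n-1}(f_2\otimes\cdots\otimes f_n)$, i.e.\ the index $i=1$ is the one fed to $\phi_1$; while $\s^{-}_{n-1,1}$ consists of the $n-1$ unshuffles $\tau$ with $\tau(1)=1$, indexed by the value $\tau(n)\in\{2,\dots,n\}$, so that the case $t=n-1$ contributes, for each $i=2,\dots,n$, the outer factor $\phi_{n-1}(f_1\otimes\cdots\otimes\hat f_i\otimes\cdots\otimes f_n)$. Thus both cases together yield a single sum $\sum_{i=1}^n$. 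Finally $G^2(f_i)(1\otimes\alpha\otimes v\otimes 1)=f_i(\alpha v)$ if $\alpha v\in\R$ and $0$ if $\alpha v$ is a nonzero path (because $G_2$ is supported on the relations), and likewise $G^2(f_i)(1\otimes v\otimes\beta\otimes 1)=f_i(v\beta)$ if $v\beta\in\R$ and $0$ otherwise. The first-slot terms assemble into $A$ and appear exactly when $\alpha v\in\R$, while the second-slot terms, which enter with the opposite sign coming from the $h\otimes\Id-\Id\otimes h$ pattern in \eqref{circ}, assemble into $-B$ and appear exactly when $v\beta\in\R$; this produces the four cases simultaneously.

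The hard part is the sign bookkeeping. One must verify that the coefficients $\chi(\tau)\kappa(\tau)_t$ attached to the single unshuffle with $t=1$ and to the $n-1$ unshuffles with $t=n-1$, once combined with the signs of \eqref{circ} and with the reordering that puts each outer argument into the canonical form $f_1\otimes\cdots\otimes\hat f_i\otimes\cdots\otimes f_n$, collapse to the common global factor $(-1)^n$, uniformly in $i$, and that the first- and second-slot contributions retain opposite overall signs so as to yield $A$ and $-B$ respectively.
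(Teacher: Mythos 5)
Your proof is correct and takes essentially the same approach as the paper's: both use Lemma \ref{v_n anula} to kill every summand of $v_n$ whose inner factor is a $\phi_m$ with $m\geq 2$, leaving only the $t=1$ and $t=n-1$ unshuffles with inner factor $\phi_1=G^2$, which is then evaluated on the relations. Your write-up is in fact more detailed than the paper's one-line argument; the sign check you flag as remaining is routine here (for arguments of odd shifted degree every $\chi(\tau)=1$, and $\kappa(\tau)_1=\kappa(\tau)_{n-1}=(-1)^n$ for the surviving unshuffles), and the paper omits it entirely.
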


\begin{proof} The result follows from Lemma \ref{v_n anula} since, for any $n-t \geq 2$, 
	\begin{align*}\phi_t (\phi_{n-t} \otimes \Id  - \Id \otimes \phi_{n-t})(f_{\tau(1)} \otimes \cdots \otimes f_{\tau(n)})(1 \otimes \alpha \otimes v \otimes \beta \otimes 1)  =0,\\
	\phi_1(f) = G^2(f) \quad \mbox{ and } \quad G^2(f) (1 \otimes \alpha \otimes v \otimes 1)= \begin{cases} 0 & \mbox{if $\alpha v \not =0$}\\
	f(\alpha v)& \mbox{if $\alpha v \in \R$}.
	\end{cases}
	\end{align*}
	Similarly for $v \beta$.
\end{proof}

\subsection{Radical square zero algebras}

In this subsection we assume that $A$ is a radical square zero algebra, that is, $A=\K Q/J^2$, where $J$ is the two sided ideal of $\K Q$ generated by the arrows.

\begin{theorem}
	If  $A=\K Q/J^2$  then the shifted Bardzell's complex $B^*(A)[1]$ is a dg-Lie algebra and 
	$G^*: \overline C^*(A) \to B^*(A)$ is a quasi-isomorphism of dg-Lie algebras. 
	In particular, there is a bijection $$\MC(\overline C^*(A) \otimes (t) ) \simeq \MC(B^*(A) \otimes (t))$$ and  $f = \sum_{i\geq 1} f_i t^i  \in \Hom_{E-E}(\K J^2, A) \otimes (t) $ satisfies the Maurer-Cartan equation if and only if, for any $(\alpha_1 \alpha_2, \alpha_2 \alpha_3) \in AP_3$,
	$$f_i(\alpha_1 \alpha_2) \alpha_3 - \alpha_1 f_i (\alpha_2 \alpha_3) +   \sum_{j+k=i} f_j G_2(1 \otimes f_k(\alpha_1 \alpha_2) \otimes \alpha_3 \otimes 1-1 \otimes \alpha_1 \otimes f_k(\alpha_2 \alpha_3) \otimes 1)=0.$$
\end{theorem}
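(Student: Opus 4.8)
The plan is to prove that for $A = \K Q/J^2$ the homotopy $H^*$ vanishes identically; this single fact collapses the whole $L_\infty$-machinery of Section \ref{sec:transfer} to a strict dg-Lie structure. Since $A$ has radical square zero, $\rad A$ has a basis given by the arrows, so every $A$-bimodule generator $1 \otimes v_1 \otimes \cdots \otimes v_n \otimes 1$ of $\overline C_n(A) = A \otimes_E \rad A^{\otimes_E n} \otimes_E A$ is, by multilinearity, a $\K$-linear combination of elements $1 \otimes \alpha_1 \otimes \cdots \otimes \alpha_n \otimes 1$ with all $\alpha_i \in Q_1$. By Remark \ref{cuentas} (iii), $H_n$ annihilates each such element, and since $H_n$ is an $A$-$A$-map it follows that $H_* = 0$, whence $H^* = 0$ on cochains.

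Granting $H^* = 0$, the rest is formal. The contraction identities of Subsection \ref{subsec:Linf} give $G^* F^* - \Id = H^* d + d H^* = 0$, so together with $F^* G^* = \Id$ the maps $F^*$ and $G^*$ are mutually inverse isomorphisms of complexes. Moreover $\phi_n = H^*(u_n + v_n) = 0$ for all $n \geq 2$ by \eqref{eq: phin}. Inspecting \eqref{eq: vn}, for $n \geq 3$ every summand of $v_n$ is a bracket $[\phi_t, \phi_{n-t}]\hat\tau$ with $1 \leq t \leq n-1$; since then $t \geq 2$ or $n-t \geq 2$, at least one factor $\phi_t$, $\phi_{n-t}$ vanishes, so $v_n = 0$ and $l_n = F^* v_n = 0$ for all $n \geq 3$ by \eqref{eq: ln}. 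This exhibits $(B^*(A)[1], l_1, l_2)$ as a dg-Lie algebra. Finally, from $l_2 = F^*[G^*-, G^*-]$ and $G^* F^* = \Id$ one checks directly that $G^* l_2(x \otimes y) = [G^* x, G^* y]$ and $F^*[u, v] = l_2(F^* u \otimes F^* v)$, so the mutually inverse comparison maps $F^*$ and $G^*$ are isomorphisms of dg-Lie algebras, in particular quasi-isomorphisms.

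The Maurer-Cartan statement then follows from the general principle that a dg-Lie quasi-isomorphism induces a bijection on Maurer-Cartan sets, cited as \cite[Theorem 7.8]{DMZ}; here the map is even an isomorphism. To obtain the explicit equation I would specialize the generalized Maurer-Cartan equation. Because $l_n = 0$ for $n \geq 3$, the only surviving higher term is $n = 2$, with coefficient $(-1)^{3}\tfrac{1}{2!} = -\tfrac12$, so the equation for $f = \sum_{i \geq 1} f_i t^i$ reduces in each degree $i$ to $-\delta^2(f_i) + \tfrac12 \sum_{j+k=i} l_2(f_j \otimes f_k) = 0$. Evaluating on a generator $(\alpha_1\alpha_2, \alpha_2\alpha_3) \in AP_3$, the term $-\delta^2(f_i)$ yields $f_i(\alpha_1\alpha_2)\alpha_3 - \alpha_1 f_i(\alpha_2\alpha_3)$ from the Bardzell differential, while for $l_2$ I would combine the formula of Remark \ref{lem: trunc} with the explicit comparison value $F_3(1 \otimes (\alpha_1\alpha_2, \alpha_2\alpha_3) \otimes 1) = 1 \otimes \alpha_1 \otimes \alpha_2 \otimes \alpha_3 \otimes 1$ from \cite{RR1}; the graded antisymmetry \eqref{eq: conmutar} of $l_2$ on the odd elements $f_j$ makes the two orderings produced by \eqref{circ} coincide, absorbing the factor $\tfrac12$ and leaving the stated sum $\sum_{j+k=i} f_j G_2(1 \otimes f_k(\alpha_1\alpha_2) \otimes \alpha_3 \otimes 1 - 1 \otimes \alpha_1 \otimes f_k(\alpha_2\alpha_3) \otimes 1)$.

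I expect the only genuine obstacle to lie in the sign bookkeeping of this last paragraph: reconciling the normalization $(-1)^{(n+1)n/2}/n!$ and the shift convention $l_1 = -\delta^2$ with the intrinsic signs of the Bardzell differential and of the Gerstenhaber bracket \eqref{circ}, and confirming that the factor $\tfrac12$ cancels exactly against the two orderings. The conceptual core, namely that $H^* = 0$ forces $l_n = 0$ for $n \ge 3$, is clean and uses nothing beyond Remark \ref{cuentas} (iii).
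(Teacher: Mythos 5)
Your proposal is correct and takes essentially the same route as the paper: $H_*=0$ by Remark \ref{cuentas}, hence $\phi_n=0$ for $n\geq 2$ and $l_n=F^*v_n=0$ for $n\geq 3$, so $B^*(A)[1]$ is a dg-Lie algebra, and the Maurer-Cartan equation collapses to $-\delta^2(f)+\tfrac12 l_2(f\otimes f)=0$, evaluated on $AP_3$ via Remark \ref{lem: trunc} exactly as in the paper's proof. Your extra observations (that $F^*$ and $G^*$ become mutually inverse isomorphisms of dg-Lie algebras, and the double-counting over ordered pairs $(j,k)$ that absorbs the factor $\tfrac12$) are correct refinements of the same argument, not a different method.
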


\begin{proof} From Remark \ref{cuentas} we have that $H_n=0$ for all $n$. Then 
	$$\phi_t(f_1 \otimes \cdots \otimes f_t) =  H^{m+1} v_t(f_1 \otimes \cdots \otimes f_t) = (-1)^m v_t(f_1 \otimes \cdots \otimes f_t) H_m =
	0$$
	for any $t \geq 2$. 
	Moreover, $v_n$ is defined in terms of $[\phi_t, \phi_{n-t}]$ with $1 \leq t <n$, and hence,  for any $n >2$, $l_n = F^s v_n =0$.  Then $B^*(A)[1]$ is a dg-Lie algebra and $\phi_1=G$ is a quasi-isomorphism of dg-Lie algebras.   In this case, the Maurer-Cartan equation reduces to 
	$-\delta^2 (f) + \frac 12 l_2 (f \otimes f) =0$ and, 
	for any $(\alpha_1 \alpha_2, \alpha_2 \alpha_3) \in AP_3$,
	\begin{align*}
	-\delta^2(f_i) (\alpha_1 \alpha_2, \alpha_2 \alpha_3) & = f_i(\alpha_1 \alpha_2) \alpha_3 - \alpha_1 f_i (\alpha_2 \alpha_3), \\
	G^2 (f_i) (G^2 (f_j) \otimes \Id) F_3 (1 \otimes  (\alpha_1 \alpha_2, \alpha_2 \alpha_3) \otimes 1)
	&= f_i G_2 (1 \otimes f_j(\alpha_1 \alpha_2) \otimes \alpha_3 \otimes 1), \mbox{ and} \\
	G^2 (f_i) (\Id \otimes G^2 (f_j)) F_3 (1 \otimes  (\alpha_1 \alpha_2, \alpha_2 \alpha_3) \otimes 1)
	&= f_i G_2 (1 \otimes \alpha_1 \otimes f_j( \alpha_2 \alpha_3) \otimes 1).
	\end{align*}
	
\end{proof}

The previous theorem allows us to find a formula for the Gerstenhaber bracket $[u,v]$ for any  $u,v \in \HH^2(A)$ when $\rad^2 A=0$ by using Bardzell's complex since $[u,v] = G^3 l_2 (F^2(u) \otimes F^2(v))$.  The importance of this result relies in the fact that Bardzell's complex has shown to be more efficient than Hochschild complex when making concrete computations.

It is clear that it suffices to compute $[f, g]$ for any $f, g$ in the described basis of $B^*(A)$. In particular, the compositions corresponding to 
the first and the second summand in \eqref{circ} for elements in $C^2(A)$ behave as follows in $B^2(A)$:
\begin{align*}
(\beta \alpha_3 \vert \vert \gamma) \circ_0 (\alpha_1 \alpha_2 \vert \vert \beta) = (\alpha_1 \alpha_2 \alpha_3 \vert \vert  \gamma), \\
(\alpha_1 \beta \vert \vert \gamma) \circ_1 (\alpha_2 \alpha_3 \vert \vert \beta) = (\alpha_1 \alpha_2 \alpha_3 \vert \vert \gamma),
\end{align*}
with $\beta \in Q_1$, $\gamma \in Q_0 \cup Q_1$, and all the other cases vanish.

One can observe that the bracket may be non-zero only if the quiver contains a subquiver as follows
\[
\xymatrix{ 
	\bullet  \ar@/^1.3pc/[rr]^\beta  \ar@/_1.3pc/[rrr]^\gamma \ar[r]^{\alpha_1} & \bullet \ar[r]^{\alpha_2}  & \bullet  \ar[r]^{\alpha_3} & \bullet 
} \qquad \mbox{or} \qquad
\xymatrix{
	\bullet   \ar@/_1.3pc/[rrr]^\gamma \ar[r]^{\alpha_1} & \bullet \ar[r]^{\alpha_2} \ar@/^1.3pc/[rr]^\beta   & \bullet  \ar[r]^{\alpha_3} & \bullet 
}\]
where the drawn vertices are not necessarily different, and hence the subquiver may contain loops and cycles. 
We can conclude that if the quiver does not contain subquivers as above, Maurer-Cartan elements {of $B^*(A)[1]$} are in one to one correspondence with Hochschild $2$-cocycles. { Moreover, the Maurer-Cartan elements of $B^*(A)[1] \otimes ((t))$ are of the form $\sum_{i\geq 1} f_i t^i$ with $f_i$ Hochschild $2$-cocycles.}
When the quiver contains a subquiver as above, then we cannot deduce a relation between Maurer-Cartan elements and Hochschild $2$-cocycles as we note in the following examples.

\begin{example}
	Let $A=\K Q/ I$ be the radical square zero algebra with 
	\[
	\xymatrix{ 
		Q: \qquad & \bullet  \ar@/^1.3pc/[rr]^\beta  \ar@/_1.3pc/[rrr]^\gamma \ar[r]^{\alpha_1} & \bullet \ar[r]^{\alpha_2}     & \bullet  \ar[r]^{\alpha_3} & \bullet
	} \] 
	Let $f=(\alpha_1 \alpha_2 \vert \vert \beta) +  (\beta \alpha_3 \vert \vert \gamma)$, then 
	\begin{align*}
	&\delta^2 (f )(\alpha_1\alpha_2,\alpha_2\alpha_3)= 0, &
	&l_2(f,f)(\alpha_1\alpha_2,\alpha_2\alpha_3)= 2\gamma.
	\end{align*}
	Thus, $f$ is a Hochschild 2-cocycle but it is not a Maurer-Cartan element {of $B^*(A)[1]$}.	
\end{example}

\begin{example}
	Let $A=\K Q/ I$ be the radical square zero algebra with  quiver
	\[ \xymatrix{ 
		& 2 \ar@/^1pc/[ld]^{\alpha_2} \ar@/^1pc/[dr]^{\alpha_4} \\
		1 \ar@/^1pc/[ru]^{\alpha_1} \ar@/_1pc/[rr]^{\alpha_3}  & & 3
	} \] 
	Let $f_1= (\alpha_2\alpha_3 \vert \vert \alpha_4)+ (\alpha_1\alpha_4 \vert \vert \alpha_3)$ and $f_2=(\alpha_1\alpha_2 \vert \vert e_1) + (\alpha_2\alpha_1 \vert \vert e_2)$.
	Using the formulas above one can check that
	\begin{align*}
	&\delta^2( f_2) (\alpha_1\alpha_2,\alpha_2\alpha_3)=- \alpha_3,
	&
	&\delta^2( f_2)(\alpha_2\alpha_1,\alpha_1\alpha_4)=-\alpha_4, \\
	&l_2(f_1,f_1)(\alpha_1\alpha_2,\alpha_2\alpha_3)=- 2\alpha_3,
	& 
	&l_2(f_1,f_1) (\alpha_2\alpha_1,\alpha_1\alpha_4)= -2\alpha_4,
	\end{align*}
	and $0$ otherwise. In this case $f=f_1+f_2$ is a Maurer-Cartan element  of $B^*(A)[1]$ but it is not a Hochschild 2-cocycle.
	Moreover, $f=f_1t+f_2t^2$ is a Maurer-Cartan element of  $B^*(A)[1]\otimes ((t))$ since
	$$l_1(f)+\frac{1}{2}l_2(f,f)= -\delta^2(f_1)t+(-\delta^2(f_2)+\frac{1}{2}l_2(f_1,f_1))t^2 +l_2(f_1,f_2)t^3+\frac{1}{2}l_2(f_2,f_2)t^4=0.$$	
	
\end{example}

\subsection{Truncated quiver algebras}
In this subsection we  assume that $A$ is a truncated quiver algebra, that is, $A=\K Q/J^n$ for some $n\geq 2$, where $J$ is the two sided ideal of $\K Q$ generated by the arrows.

\begin{proposition}
	Let $A=\K Q/J^n$ for some $n\geq 2$ and let $f_1, \cdots, f_m \in B^1(A)[1]$, then $$l_m(f_1 \otimes \cdots \otimes f_m)=0, \quad \forall\, m\geq 3.$$
\end{proposition}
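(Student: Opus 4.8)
The plan is to compute $l_m$ directly from its recursive definition $l_m = F^* v_m$ and to show that for a truncated algebra $A = \K Q/J^n$ the map $v_m$ already vanishes when restricted to the relevant part of Bardzell's complex, for all $m \geq 3$. Since $f_1, \dots, f_m \in B^1(A)[1] = \Hom_{E\text{-}E}(\K AP_2, A)$, the elements of $AP_3$ are concatenations of the form $(\alpha_1\alpha_2, \alpha_2\alpha_3)$, and applying $F^*$ amounts to evaluating $v_m$ on chains of the shape $1 \otimes \alpha \otimes v \otimes \beta \otimes 1$ coming from $F_3$. This is exactly the setup of Proposition \ref{veremos}, so I would begin by invoking it to reduce the computation of $v_m(f_1 \otimes \cdots \otimes f_m)$ on such chains to the terms $A$ and $B$ built out of $\phi_{m-1}$.

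The key structural observation for a truncated algebra is about lengths. A relation in $\R$ is a path of length exactly $n$, so every $f_i \in \Hom_{E\text{-}E}(\K AP_2, A)$ sends a relation $\alpha_1\alpha_2 \in AP_2$ (a path of length $n$) to an element $f_i(\alpha_1\alpha_2) \in A$ which must be a path parallel to $\alpha_1\alpha_2$, hence of length $n$ as well (lengths in $A = \K Q/J^n$ are bounded by $n-1$ on $\rad A$, but parallel paths to a relation have the same length modulo the grading—here the point is that $f_i(\alpha_1\alpha_2)$ has length $n$ whenever it is nonzero). Now in Proposition \ref{veremos}, the terms $A$ and $B$ require evaluating $\phi_{m-1}$ on chains $1 \otimes f_i(\alpha v) \otimes \beta \otimes 1$ and $1 \otimes \alpha \otimes f_i(v\beta) \otimes 1$. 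The crucial point is that these chains have a middle tensor factor whose product with its neighbor is forced to be \emph{nonzero} in $A$: since $f_i(\alpha v)$ is parallel to the length-$n$ relation $\alpha v$, the concatenation $f_i(\alpha v)\,\beta$ is a path of length $n+1 > n$, hence zero in $A$, while $\alpha\, f_i(\alpha v)$ is likewise controlled. I would then apply Lemma \ref{v_n anula}: its hypothesis is precisely that the relevant products are either nonzero or lie in $\R$, and its conclusion is that $\phi_{m-1}$ vanishes on such chains.

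Concretely, I would argue by induction on $m \geq 3$. For the base case and the inductive step, Proposition \ref{veremos} expresses $v_m$ on the generating chains in terms of $\phi_{m-1}$ evaluated on chains of the form $1 \otimes w \otimes \beta \otimes 1$ with $w = f_i(\alpha v)$ parallel to a relation. Because $A$ is truncated, the product of $w$ with the adjacent arrow is governed by length considerations that place us squarely in the hypotheses of Lemma \ref{v_n anula}, forcing $\phi_{m-1}$ to vanish on these chains. Hence both $A = 0$ and $B = 0$ in Proposition \ref{veremos}, so $v_m(f_1 \otimes \cdots \otimes f_m)$ vanishes on all chains arising from $F_3$, and therefore $l_m = F^* v_m = 0$.

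The main obstacle I anticipate is verifying that the length bookkeeping genuinely forces the hypotheses of Lemma \ref{v_n anula} in every case, i.e. checking that for each arrow $\beta$ adjacent to $f_i(\alpha v)$ the product $f_i(\alpha v)\beta$ (and symmetrically $\alpha f_i(v\beta)$) is either a path in $\R$ or vanishes, never landing in the ambiguous regime where Lemma \ref{v_n anula} does not apply. For a truncated algebra this should be automatic because $\R = Q_{\geq n}$ consists of \emph{all} paths of length $n$, so any path of length $\geq n$ is either in $\R$ (length exactly $n$) or zero in $A$ (length $> n$); but I would want to confirm that $f_i$ landing on a parallel path of the correct length is guaranteed by the parallelism condition defining the basis of $B^1(A)[1]$, rather than assumed. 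Once that length argument is pinned down, the vanishing of $l_m$ follows cleanly from the cited results.
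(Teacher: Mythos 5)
Your overall strategy is exactly the paper's: write $l_m = F^* v_m$, evaluate on $F_3$ of a $3$-concatenation, invoke Proposition \ref{veremos} to reduce everything to the terms $A$ and $B$, and then kill $A$ and $B$ with Lemma \ref{v_n anula}. However, the length bookkeeping you use to place yourself in the hypotheses of Lemma \ref{v_n anula} --- which you yourself flag as the main obstacle --- is wrong, and taken literally it would break the argument rather than complete it. You claim that $f_i(\alpha v)$, being parallel to the length-$n$ relation $\alpha v$, ``has length $n$ whenever it is nonzero'', and consequently that $f_i(\alpha v)\beta$ is a path of length $n+1$, hence zero in $A$. Both claims are false: parallelism only means equal source and target, and a basis element $(w \vert\vert \gamma)$ of $B^1(A)[1]$ has $\gamma \notin I$, so $f_i(\alpha v)$ is a combination of paths of length at most $n-1$; a path of length $n$ would already be zero in $A=\K Q/J^n$. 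Worse, if your conclusion were true it would defeat the proof: the hypothesis of Lemma \ref{v_n anula} is that the relevant products are nonzero \emph{or lie in $\R$}, and a path of length $n+1$, which is zero in $A$ but not a relation, is precisely the excluded case. Your fallback remark that ``any path of length $\geq n$ is either in $\R$ or zero in $A$'' has the same defect: the alternative ``zero of length $>n$'' does not satisfy the lemma's hypothesis, so this observation cannot close the gap.

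The correct verification runs in the opposite direction. Since $f_i(\alpha v)$ and $f_i(v\beta)$ are nonzero elements of $A$, they are combinations of paths of length at most $n-1$; concatenating with the single adjacent arrow produces paths of length at most $n$, and in a truncated algebra every path of length at most $n$ is either nonzero (length $<n$) or a relation in $\R$ (length exactly $n$). The bad case --- zero but not in $\R$ --- would require length strictly greater than $n$ and therefore never occurs. This is what puts the terms $A$ and $B$ of Proposition \ref{veremos} inside the hypotheses of Lemma \ref{v_n anula}. (Note also that for the term $A$, i.e.\ chains of the form $1 \otimes f_i(\alpha v)\otimes \beta \otimes 1$, the vanishing is the second equation of Lemma \ref{v_n anula}, which holds simply because $H_2$ annihilates chains $1\otimes v\otimes\beta\otimes 1$; only the term $B$ genuinely needs the length argument.) Two smaller points: for general $\K Q/J^n$ the elements of $AP_3$ are $(\alpha_1\cdots\alpha_n,\alpha_2\cdots\alpha_{n+1})$ with the $\alpha_i$ arrows, not $(\alpha_1\alpha_2,\alpha_2\alpha_3)$; and no induction on $m$ is needed, since Proposition \ref{veremos} and Lemma \ref{v_n anula} already hold for all relevant indices.
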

\begin{proof} Let $w\in AP_3$, that is, $w=(\alpha_1\dots \alpha_n, \alpha_2\dots \alpha_{n+1})$.
	Then 
	\begin{align*}l_m(f_1 \otimes \cdots \otimes f_m)(w) &= v_m(f_1 \otimes \cdots \otimes f_m)F_3(w), \\
	F_3(1\otimes w\otimes 1)&= \sum_{i=2}^n 1\otimes \alpha_1\otimes\alpha_2\dots\alpha_i\otimes \alpha_{i+1}\otimes \alpha_{i+2}\dots\alpha_{n+1}.
	\end{align*}
	From  Proposition \ref{veremos} we only have to prove that 
	$$v_m(f_1 \otimes \cdots \otimes f_m) (1\otimes \alpha_1\otimes\alpha_2\dots\alpha_n\otimes \alpha_{n+1}\otimes 1)=0,$$
	and this follows from Lemma \ref{v_n anula}.
\end{proof}

\begin{example} 
	Let $A=\K Q/ J^n$, $n\geq 2$ be the algebra with quiver
	\[
	\xymatrix{ 
		\ar@(ul,dl)_{\alpha} \bullet \ar[r]_{\beta} & \bullet .
	}  \] 
	Let  $f_1= (\alpha^{n-1}\beta\vert \vert \alpha^{n-2} \beta)$ and $f_2=(\alpha^n \vert \vert \alpha^{n-2})$.
	One can check that, 
	\begin{align*}
	\delta^2( f_2 )(\alpha^n, \alpha^{n-1}\beta)= -\alpha^{n-2}\beta, 
	\qquad l_2(f_1,f_1)(\alpha^n, \alpha^{n-1}\beta)=     -2\alpha^{n-2}\beta,
	\end{align*}
	and $0$ otherwise. Then $f=f_1+f_2$ is a Maurer-Cartan element  of $B^*(A)[1]$ and $f=f_1t+f_2t^2$ is a Maurer-Cartan element of  $B^*(A)[1]\otimes ((t))$.

\end{example}

As the following example shows, we cannot expect to have dg-Lie algebras in general.

\begin{example}
	Let  $A=\K Q/J^3$ be the algebra whose quiver $Q$ is given by
	\[ \xymatrix{ 
		& 8 \ar[rrd]^{\beta_2}  \ar[rrr]^{\gamma_1} & & &  9  \ar[rd]^{\gamma_2} & & & \\
		1  \ar[r]_{\alpha_1} \ar[ru]^{\beta_1}   \ar@/_2.8pc/[rrrrrr]^{\mu}  &  2 \ar[r]_{\alpha_2}    &   3  \ar[r]_{\alpha_3}  & 4 \ar[r]_{\alpha_4} & 5 \ar[r]_{\alpha_5} &  6\ar[r]_{\alpha_6} &  7 .
	} \] 
	Let $f_1= ( \alpha_1\alpha_2\alpha_3 || \beta_1\beta_2), f_2= ( \beta_2\alpha_4\alpha_5 || \gamma_1\gamma_2) \in B^1(A)[1]$ and $f_3= ((\beta_1\gamma_1\gamma_2,\gamma_1\gamma_2\alpha_6) || \mu) \in B^2(A)[1]$.
	Then $$l_3(f_1,f_2,f_3)(\alpha_1\alpha_2\alpha_3,\alpha_2\alpha_3\alpha_4,\alpha_4\alpha_5\alpha_6)=-\mu.$$

\end{example}

\end{document}